\newtheorem{lemma}{Lemma}[section]
\newtheorem{proposition}[lemma]{Proposition}
\newtheorem{theorem}[lemma]{Theorem}
\theoremstyle{definition}
\newtheorem{definition}[lemma]{Definition}
\DeclareMathOperator{\id}{id}
\DeclareMathOperator{\chr}{char}
\DeclareMathOperator{\im}{im}
\newcommand{\Z}{\mathbb{Z}}
\newcommand{\R}{\mathbb{R}}
\newcommand{\F}{\mathbb{F}}
\newcommand{\FBN}{\mathcal{F}_{\mathrm{BN}}}
\newcommand{\CBN}{C_{\mathrm{BN}}}
\newcommand{\HBN}{H_{\mathrm{BN}}}
\newcommand{\Cob}{\mathcal{C}ob_\bullet}
\newcommand{\Cobl}{\mathcal{C}ob_{\bullet/l}}
\newcommand{\Fml}{\mathrm{Foam}_{\bullet/l}}
\newcommand{\plane}[2]{
\shade[color = gray!40, opacity = 0.3] (#1,#2-0.5) -- (#1+1,#2) -- (#1+1,#2+1.5) -- (#1,#2+1) -- (#1,#2-0.5);
\draw (#1,#2-0.5) -- (#1+1,#2) -- (#1+1,#2+1.5) -- (#1,#2+1) -- (#1,#2-0.5);
}
\newcommand{\parallelbox}[4] {
\shade[color = gray!40, opacity = 0.3] (#1,#2) -- (#1 + #3, #2) -- (#1 + #3 + 0.3 * #4, #2 + #4) -- (#1 + 0.3 * #4, #2 + #4) -- (#1, #2);
\draw[-] (#1,#2) -- (#1 + #3, #2) -- (#1 + #3 + 0.3 * #4, #2 + #4) -- (#1 + 0.3 * #4, #2 + #4) -- (#1, #2);
}
\newcommand{\sphere}[3]{
\shade[ball color = gray!40, opacity = 0.3] ({#1},{#2}) circle ({#3});
\draw (#1,#2) circle ({#3});
\draw (#1-#3,#2) arc (180:360:#3 and 0.3*#3);
\draw[dashed] (#1+#3,#2) arc (0:180:#3 and 0.3*#3);
}
\newcommand{\spheredot}[3]{
\sphere{#1}{#2}{#3}
\node at (#1-0.3*#3,#2+0.45*#3) [scale = 1.5*#3] {$\bullet$};
}
\newcommand{\bowl}[4]{
\shade[ball color = gray!40, opacity = 0.3] (#1+0.8*#3,#2) arc (0:180:0.4*#3 and -0.2*#3) -- (#1,#2-#4*#3) arc (180:360:0.4*#3 and 0.4*#3) -- (#1+0.8*#3,#2);
\draw (#1+0.8*#3,#2) arc (0:180:0.4*#3 and -0.2*#3) -- (#1,#2-#4*#3) arc (180:360:0.4*#3 and 0.4*#3) -- (#1+0.8*#3,#2);
\draw (#1,#2) arc (180:360:0.4*#3 and -0.2*#3);
\shade[ball color = gray!40, opacity = 0.15] (#1,#2) arc (180:360:0.4*#3 and -0.2*#3) arc (0:180:0.4*#3 and -0.2*#3);
}
\newcommand{\bowlud}[4]{
\shade[ball color = gray!40, opacity = 0.3] (#1,#2) arc (180:360:0.4*#3 and 0.2*#3) -- (#1+0.8*#3,#2+#4*#3) arc (0:180:0.4*#3 and 0.4*#3) -- (#1,#2);
\draw (#1,#2) arc (180:360:0.4*#3 and 0.2*#3) -- (#1+0.8*#3,#2+#4*#3) arc (0:180:0.4*#3 and 0.4*#3) -- (#1,#2);
\draw[dashed] (#1+0.8*#3,#2) arc (0:180:0.4*#3 and 0.2*#3);
}
\newcommand{\cylinder}[4]{
\shade[ball color = gray!40, opacity = 0.3] (#1,#2) arc (180:360:0.4*#3 and 0.2*#3) -- (#1+0.8*#3,#2+#4*#3) 
arc (0:180:0.4*#3 and -0.2*#3);
\shade[ball color = gray!40, opacity = 0.15] (#1,#2+#4*#3) arc (180:360:0.4*#3 and -0.2*#3) arc (0:180:0.4*#3 and -0.2*#3);
\draw (#1,#2) arc (180:360:0.4*#3 and 0.2*#3) -- (#1+0.8*#3,#2+#4*#3) 
arc (0:180:0.4*#3 and 0.2*#3) -- (#1,#2);
\draw[dashed] (#1+0.8*#3,#2) arc (0:180:0.4*#3 and 0.2*#3);
\draw (#1,#2+#4*#3) arc (180:360:0.4*#3 and 0.2*#3);
}
\newcommand{\pcrossing}[3] {
\draw[->, thick, >=latex] (#1, #2) -- (#1+#3, #2+#3);
\draw[-, thick, >=latex] (#1+#3, #2) -- (#1+ 0.6 * #3, #2 + 0.4 * #3);
\draw[->, thick, >=latex] (#1+ 0.4 * #3, #2 + 0.6 * #3) -- (#1, #2+#3);
}
\newcommand{\mcrossing}[3] {
\draw[->, thick, >=latex] (#1+#3, #2) -- (#1, #2+#3);
\draw[-, thick, >=latex] (#1, #2) -- (#1+ 0.4 * #3, #2 + 0.4 * #3);
\draw[->, thick, >=latex] (#1+ 0.6 * #3, #2 + 0.6 * #3) -- (#1+#3, #2+#3);
}
\newcommand{\nssmoothing}[4] {
\draw[#4, thick, >=latex] (#1, #2) to [out = 45, in = 270] (#1+0.3 * #3, #2+0.5 * #3) to [out = 90, in = 315] (#1, #2+#3);
\draw[#4, thick, >=latex] (#1+#3, #2) to [out = 135, in = 270] (#1+0.7* #3, #2+0.5 * #3) to [out = 90, in = 225] (#1+#3, #2+#3);
}
\newcommand{\wesmoothing}[4] {
\draw[#4, thick, >=latex] (#1, #2) to [out = 45, in = 180] (#1+0.5 * #3, #2+0.3 * #3) to [out = 0, in = 135] (#1+#3, #2);
\draw[#4, thick, >=latex] (#1, #2+#3) to [out = 315, in = 180] (#1+0.5 * #3, #2+0.7 * #3) to [out = 0, in = 225] (#1+#3, #2+#3);
} 
\newcommand{\thetafoam}[3] {
\fill[color = gray, opacity = 0.7] (#1-#3,#2) arc (180:360:#3 and 0.3*#3) arc (0:180:#3 and 0.3*#3);
\shade[ball color = gray!40, opacity = 0.3] ({#1},{#2}) circle ({#3});
\draw (#1,#2) circle ({#3});
\draw[thick] (#1-#3,#2) arc (180:360:#3 and 0.3*#3);
\draw[dashed, thick] (#1+#3,#2) arc (0:180:#3 and 0.3*#3);
\draw[<-, >=latex, thick] (#1-0.2*#3,#2-0.3 * #3) -- (#1, #2-0.3 * #3);
}
\newcommand{\spidering}[4] {
\draw[#4, thick, >=latex] (#1, #2) -- (#1+0.5*#3, #2+0.25*#3);
\draw[#4, thick, >=latex] (#1+#3, #2) -- (#1+0.5*#3, #2+0.25*#3);
\draw[#4, thick, >=latex] (#1+0.5*#3, #2+0.75*#3) -- (#1, #2+#3);
\draw[#4, thick, >=latex] (#1+0.5*#3, #2+0.75*#3) -- (#1+#3, #2+#3);
\draw[#4, thick, >=latex] (#1+0.5*#3, #2+0.75*#3) -- (#1+0.5*#3, #2+0.25*#3);
}
\newcommand{\smoothing}[4] {
\draw[#4, thick, >=latex] (#1, #2) to [out = 45, in = 315] (#1, #2+#3);
\draw[#4, thick, >=latex] (#1+#3, #2) to [out = 135, in = 225] (#1+#3, #2+#3);
}
\begin{document}
\parindent0em
\setlength\parskip{.1cm}
\thispagestyle{empty}
\title{Efficient calculations of $S$-invariants for links}
\author[Dirk Sch\"utz]{Dirk Sch\"utz}
\address{Department of Mathematical Sciences\\ Durham University\\ Durham DH1 3LE\\ United Kingdom}
\email{dirk.schuetz@durham.ac.uk}

\begin {abstract}
We describe an algorithm that can effectively calculate the $s$-invariant of a link as defined by Beliakova and Wehrli. Our computations show that this cannot be done by merely calculating the $E_\infty$-page of the Bar-Natan--Lee--Turner spectral sequence. Our methods also work for $s$-invariants coming from $\mathfrak{sl}_3$-link homology.
\end{abstract}

\maketitle

\section{Introduction}
The $s$-invariant for knots was introduced by Rasmussen \cite{MR2729272} and has led to major applications of Khovanov homology \cite{MR1740682} to $4$-dimensional topology via a lower bound of the slice genus. A generalization to links was given by Beliakova and Wehrli \cite{MR2462446} which is a weak-slice obstruction for links.

For knots, the $s$-invariant can be calculated using the Bar-Natan--Lee--Turner spectral sequence whose $E_1$-page is Khovanov homology and whose $E_\infty$-page resembles the Khovanov homology of the unknot. This spectral sequence also exists for links, but the $E_\infty$-page is a bit more complicated. It was shown by Pardon \cite{MR2928905} that the $E_\infty$-page is a link-concordance invariant. This spectral sequence is amenable to fast calculations via Bar-Natan's approach \cite{MR2320156}, but in the case of links it may only give upper and lower bounds for the $s$-invariant.

As a result, computations of $s$-invariants for links are harder to come by. A striking example was given by Ren \cite{MR4843752}, who computed the $s$-invariants of torus links with any orientation and used it to establish an adjunction type inequality conjectured in \cite{MR4541332}. The main purpose of this paper is to give a way to compute the $s$-invariant of a link, $s^p(L)$, where $p$ is the characteristic of a field, which is comparable in complexity to calculating the $s$-invariant of a knot.

The aforementioned lower bound for $s^p(L)$ given through the $E_\infty$-page of the spectral sequence is given by
\[
s^p_{\min}(L) = \min \{j\in \Z\mid E_\infty^{0,j}\not=0\}+1,
\]
and the upper bound is given by
\[
s^p_{\max}(L) = \max\{j\in \Z\mid E_\infty^{0,j}\not=0\}-1.
\]
For a knot, $s^p_{\min}(L) = s^p_{\max}(L)$, and this can also happen for links. But whenever $s^p_{\max}(L) > s^p_{\min}(L)$, the lower bound is usually the better approximation for $s^p(L)$. Indeed, we have the following.

\begin{proposition}\label{prp:smin}
Let $L$ be an oriented link $L$ and $p$ a prime or $0$. Then there is an orientation on $L$ such that for the resulting oriented link $\tilde{L}$ we have $s^p(\tilde{L}) = s^p_{\min}(L)$. Moreover, the total linking number of $\tilde{L}$ and $L$ are the same.
\end{proposition}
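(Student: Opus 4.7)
The plan is to identify the generators contributing to $E_\infty^{0,*}$ as Bar-Natan canonical classes associated to various orientations of $L$, and then to observe that the condition forcing these classes to sit in homological degree $0$ is exactly the condition that preserves the total linking number.

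Fix the given orientation $o$ of $L$. For any other orientation $o'$, let $E = E_{o'} \subseteq \pi_0(L)$ be the set of components on which $o'$ disagrees with $o$, and set
\[
\lambda_o(E) \;:=\; \sum_{i \in E,\, j \notin E} \mathrm{lk}^o(K_i,K_j).
\]
First I would compute the bigrading of the canonical generator $\mathfrak{s}_{o'}$ viewed inside $\CBN(L,o)$. A crossing-by-crossing analysis (the $o'$- and $o$-oriented resolutions agree precisely at crossings whose strands lie on the same side of $\{E,E^c\}$ and differ at the mixed crossings) shows that $\mathfrak{s}_{o'}$ sits in homological grading $2\lambda_o(E)$ of $\CBN(L,o)$. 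The parallel signed-crossing count yields $n_+^{o'} - 2n_-^{o'} = n_+^o - 2n_-^o - 6\lambda_o(E)$, so the $q$-filtration of $\mathfrak{s}_{o'}$ in $\CBN(L,o)$ exceeds its $q$-filtration in $\CBN(L,o')$ by exactly $6\lambda_o(E)$.

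Next I would use that $\HBN(L)$ at homological degree $0$ is spanned by the canonical classes $\mathfrak{s}_{o'}$ and the Beliakova--Wehrli combinations $\mathfrak{s}_{o'}\pm\mathfrak{s}_{\overline{o'}}$ needed in characteristic $p$, and that among these only the classes coming from orientations $o'$ with $\lambda_o(E_{o'})=0$ actually land in homological grading $0$. For such $o'$, the $q$-shift cancels, so the class contributes to $E_\infty^{0,*}$ in $q$-gradings $s^p(L,o')\pm 1$ by the very definition of the Beliakova--Wehrli invariant applied to $(L,o')$. Consequently
\[
s^p_{\min}(L) \;=\; \min\bigl\{\,s^p(L,o') \,:\, \lambda_o(E_{o'}) = 0\,\bigr\},
\]
attained at some orientation $\tilde o$, which gives $\tilde L$ with $s^p(\tilde L)=s^p_{\min}(L)$.

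To finish, I would note that reversing orientations on $E_{\tilde o}$ negates exactly the pairwise linking numbers $\mathrm{lk}(K_i,K_j)$ with $|\{i,j\}\cap E_{\tilde o}|=1$, so the total linking number changes by $-2\lambda_o(E_{\tilde o})=0$ and is preserved. The main obstacle will be making the spanning statement for $\HBN^0(L)$ rigorous uniformly in $p$, in particular for $p=2$, where $\mathfrak{s}_o$ and $\mathfrak{s}_{\bar o}$ may coincide and one must work with the Bar-Natan quotient directly; but since the homological- and $q$-grading formulas above only depend on the cube of resolutions, this reduces to a routine bookkeeping check independent of characteristic.
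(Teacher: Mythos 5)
Your argument is correct and is essentially the paper's proof in expanded form: both rest on the facts that the canonical classes $\mathfrak{s}_{\mathcal{O}'}$ form a basis of $\HBN(L;\F)$, that exactly those preserving the total linking number (equivalently, with $\lambda_o(E_{o'})=0$ in your notation) span $\HBN^0(L;\F)$, and that subadditivity of the filtration then forces the minimal $q$-filtration level over $\HBN^0(L;\F)$ to be attained at one of these generators, your crossing-by-crossing bookkeeping being the verification of the homological-degree and linking-number statements that the paper leaves implicit. The characteristic-$2$ obstacle you flag at the end is moot in the Bar-Natan framework used here: $X^2-X$ is separable in every characteristic, so $\mathfrak{s}_{o'}$ and $\mathfrak{s}_{\overline{o'}}$ remain distinct idempotent-built basis elements and the spanning statement holds uniformly in $p$ by the cited diagonalizability result, with no need to pass through the Lee deformation or the $\mathfrak{s}_{o'}\pm\mathfrak{s}_{\overline{o'}}$ combinations.
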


In many cases we have $\tilde{L}=L$, for example if $s^p_{\max}(L) = s^p_{\min}(L)$, or if $L$ is a torus link with arbitrary orientation \cite[Thm.1.2]{MR4843752} (in which case often $s^p_{\max}(L) > s^p_{\min}(L)$). Nevertheless, there are exceptions.

\begin{theorem}\label{th:nonconst}
There exists a $2$-component link $L$ with linking number $0$ such that $s^p(L)$ depends on the orientations on $L$.
\end{theorem}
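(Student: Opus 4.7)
The plan is to prove this by exhibiting an explicit example, and the rest of the paper's algorithmic machinery will be used to verify the claim. So the proof is really a search-and-verify procedure, not an abstract argument.

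First I would narrow the search space. Since a $2$-component link has, up to global reversal, only two oriented representatives (flipping one strand), and since both must have the same (unsigned) linking number, I restrict attention to $2$-component links with linking number $0$ in their natural orientation. Torus links can be excluded by the discussion after Proposition~\ref{prp:smin}, and any link with $s^p_{\max}(L)=s^p_{\min}(L)$ can be excluded as well, since in that case Proposition~\ref{prp:smin} pins $s^p$ to the single value in between for every orientation. Concretely, I would enumerate prime $2$-component links of small crossing number (starting with the Thistlethwaite table through $11$ or $12$ crossings, skipping torus links) and first do a cheap pre-screening via $s^p_{\min}$ and $s^p_{\max}$, computed from the $E_\infty$-page of the Bar-Natan--Lee--Turner spectral sequence.

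For each surviving candidate $L$, I would run the algorithm of this paper on both oriented representatives $L$ and $L'$, where $L'$ is obtained by reversing one component. By Proposition~\ref{prp:smin}, one of these realises $s^p_{\min}$; the goal is to find an example where the other one does not, so that $s^p(L) \ne s^p(L')$. Once such an $L$ is identified, the proof consists of tabulating the diagrams, the resulting Bar-Natan complexes, and the canonical generators produced by the algorithm, from which $s^p(L)$ and $s^p(L')$ can be read off and seen to differ. One must also record the linking number to confirm it is $0$ for both orientations, which is automatic from the choice of search space.

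The principal obstacle is computational rather than conceptual: the existence of such an $L$ is not guaranteed a priori among links of small crossing number, and the search may need to be pushed until a witness is found. A second, subtler point is that the output of the algorithm must be presented in a form that makes the verification checkable by hand or independently reproducible, rather than just being a raw numerical output. Barring these practical hurdles, once a candidate is located the theorem follows immediately by displaying the two computed values.
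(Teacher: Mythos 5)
Your proposal is exactly the paper's approach: the theorem is proved by a computer search over small $2$-component links with linking number $0$, running the paper's algorithm on both oriented representatives and exhibiting a witness (the paper's example $L_\varepsilon$ in Figure \ref{fig:int_link}, with $s(L_+)=1$ and $s(L_-)=3$). The only practical caveat is that your search must be pushed to $12$ crossings, since the paper reports that no $2$-component example exists with $11$ or fewer crossings.
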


We note that this is not the case for the signature by \cite[Thm.1]{MR261585}.

Our algorithm carries over to $s$-invariants defined for $\mathfrak{sl}_3$-link homology \cite{MR2100691}. Both algorithms have been implemented in a computer programme which is available from the author's website.

\section{Frobenius systems and Bar-Natan homology}\label{sec:Frob}
Let $\F$ be a field. By a {\em Frobenius system} we mean a tuple $\mathcal{F} = (\F, A, \varepsilon, \Delta)$ with $A$ an $\F$-algebra with $1$, $\varepsilon\colon A\to \F$ a linear map, and $\Delta\colon A\to A\otimes_\F A$ an $A$-bimodule map that is co-associative and co-commutative, such that $(\varepsilon\otimes \id_A)\circ \Delta = \id_A$.

As an example, let $\FBN = (\F, \F[X]/(X^2-X), \varepsilon, \Delta)$, where
\[
\Delta(1) = X\otimes 1+1\otimes X - 1\otimes 1,
\]
and
\[
\varepsilon(1) = 0, \hspace{0.5cm}\varepsilon(X) = 1.
\]

Another example is given by $\mathcal{F}_L = (\F, \F[X]/(X^2-1), \varepsilon, \bar{\Delta})$, where
\[
\bar{\Delta}(1) = X\otimes 1+1\otimes X,
\]
and $\varepsilon$ is as before.

If $A$ is $2$-dimensional, Khovanov \cite{MR2232858} showed that an oriented link diagram $D$ gives rise to a cochain complex $C(D;\mathcal{F})$ over $\F$ whose homology is an oriented link invariant. In the case of $\FBN$ we denote the corresponding complex by $\CBN(D;\F)$ and its homology by $\HBN(L;\F)$, where $L$ is the link represented by $D$. We call this complex the {\em Bar-Natan complex} and the homology the {\em Bar-Natan homology}.

Bar-Natan \cite{MR2174270} has given a tangle version of which we are going to describe a simplified version that is sufficient to perform fast calculations of link homology groups.

Let $B\subset \R^2$ be a closed disc and $\dot{B}\subset \partial B$ an oriented $0$-dimensional compact manifold bordant to the empty set.

Let $\Cob(B, \dot{B})$ be the category whose objects are compact smooth $1$-dimensional submanifolds $S\subset B$ with $\partial S = \dot{B}$ and $S$ intersects $\partial B$ transversely. Morphisms between objects $S_0$ and $S_1$ are isotopy classes of dotted cobordisms between $S_0\times\{0\}$ and $S_1\times\{1\}$ embedded in $B\times [0,1]$ and which are a product cobordism near $\dot{B}\times [0,1]$. Here dotted means that there are finitely many specified points in the interior of the cobordism, which can move freely there.

We denote by $\Cobl^\F(B,\dot{B})$ the additive category whose objects are finite dimensional based $\F$-vector spaces where basis elements are objects $q^jS$ with $S$ an object of $\Cob(B,\dot{B})$ and $j\in \Z$. Here $q^j$ stands for a shift in the $q$-grading. Morphisms are given by matrices $(M_{nm})$ with each matrix entry $M_{nm}$ an element of the $\F$-vector space generated by the morphism set between $S_n$ and $S_m$ in $\Cob(B,\dot{B})$, modulo the following local relations:
\begin{equation}\label{eq:localrels}
\begin{tikzpicture}[baseline={([yshift=-.5ex]current bounding box.center)}]
\sphere{0}{0}{0.5}
\node at (1.1,0) {$= 0$,};
\spheredot{3}{0}{0.5}
\node at (4.1,0) {$= 1$,};
\plane{5}{-0.5}
\plane{7.1}{-0.5}
\node at (5.5,0.2) [scale = 0.75] {$\bullet$};
\node at (5.5,-0.2) [scale = 0.75] {$\bullet$};
\node at (7.6,0) [scale = 0.75] {$\bullet$};
\node at (6.6,0) {$=$};
\end{tikzpicture}
\end{equation}
and
\begin{equation}\label{eq:localrelb}
\begin{tikzpicture}[baseline={([yshift=-.5ex]current bounding box.center)}]
\cylinder{0}{0}{1}{1.5}
\node at (1.25,0.75) {$=$};
\bowl{1.6}{1.5}{1}{0.1}
\bowlud{1.6}{0}{1}{0.1}
\node at (2,0.1) [scale = 0.75] {$\bullet$};
\node at (2.85,0.75) {$+$};
\bowl{3.2}{1.5}{1}{0.1}
\bowlud{3.2}{0}{1}{0.1}
\node at (3.6,1.2) [scale = 0.75] {$\bullet$};
\node at (4.45,0.75) {$-$};
\bowl{4.8}{1.5}{1}{0.1}
\bowlud{4.8}{0}{1}{0.1}
\end{tikzpicture}
\end{equation}

If we think of a dot as multiplication by $X$, these relations resemble the Frobenius system $\FBN$.

Given a tangle $T$ consisting of an oriented crossing in a disc $B$ with $4$ points $\dot{B}$, we can obtain a cochain complex $\CBN(T;\F)$ as in Figure \ref{fig:crossing_complex}.
\begin{figure}[ht]
\[
\begin{tikzpicture}
\pcrossing{0}{2}{1}
\mcrossing{0}{0}{1}
\node at (1.5, 2.5) {$:$};
\node at (2.5, 2.5) {$u^0q^1$};
\nssmoothing{3}{2}{1}{ }
\draw[->] (4.2, 2.5) -- node [above] {$S$} (5.3, 2.5);
\node at (6, 2.5) {$u^1q^2$};
\wesmoothing{6.5}{2}{1}{ }
\node at (1.5, 0.5) {$\colon$};
\node at (2.5, 0.5) {$u^{-1}q^{-2}$};
\wesmoothing{3}{0}{1}{ }
\draw[->] (4.2, 0.5) -- node [above] {$S$} (5.3, 0.5);
\node at (6, 0.5) {$u^0q^{-1}$};
\nssmoothing{6.5}{0}{1}{ }
\end{tikzpicture}
\]
\caption{\label{fig:crossing_complex}The complexes associated to a positive and a negative crossing. Here $S$ stands for the standard saddle cobordism. The letters $u$ and $q$ indicate the homological and $q$-grading of the generators.}
\end{figure}

The main property from \cite{MR2174270} that we need is that these tangle complexes can be combined by a 'tensor product'. We will only describe a very basic version here. Given an oriented link diagram $D$, we can always find a sequence of tangle diagrams
\begin{equation}\label{eq:nice_tangles}
\emptyset = T_0 \subset T_1 \subset \cdots \subset T_n = D,
\end{equation}
with $T_i - T_{i-1}$ being the diagram of a single crossing. Furthermore, we can assume that each $T_i \subset B_i$ with $B_i$ a closed disc, $B_i = B_{i-1} \cup \tilde{B}_i$, $B_{i-1}\cap \tilde{B}_i = J_i$ an interval, and $\tilde{B}_i$ a closed disc containing $T_i-T_{i-1}$.

Given a cochain complex $C_{i-1}$ over $\Cobl^\F(B_{i-1},\dot{B}_{i-1})$ and $\tilde{C}_i$ over $\Cobl^\F(\tilde{B}_i,\dot{\tilde{B}}_i)$ we can form the complex $C_{i-1}\otimes \tilde{C}_i$ over $\Cobl^\F(B_i, \dot{B}_i)$, and this construction behaves well with respect to chain homotopy equivalences \cite[Thm.2]{MR2174270}. 

In particular, if we start with $C_1=\CBN(T_1;\F)$ and apply this construction inductively to $\tilde{C}_i=\CBN(T_i;\F)$, the final complex $C_n$ is a complex over $\Cobl^{\F}(B,\emptyset)$, where $B$ is a disc containing the link diagram $D$. Moreover, by the delooping result of Naot \cite[Prop.5.1]{MR2263052} and the local relations we can identify $\Cobl^{\F}(B,\emptyset)$ with the category of finite dimensional graded $\F$-vector spaces, $\mathfrak{V}_\F^q$. Denote this identification functor by $G$. On objects it satisfies $G(q^j S) = q^j A^{\otimes |S|}$, where $|S|$ is the number of components of the closed $1$-manifold $S$, and $A=\F[X]/(X^2-X)$ the graded $\F$-vector space whose $q$-grading is determined by $|1|_q = -1$ and $|X|_q=1$. In particular, we have $G(C_n) = \CBN(D;\F)$. At this stage it is worth pointing out that we do not require linear maps in $\mathfrak{V}^q_\F$ to be grading preserving. In particular, $\CBN(D;\F)$ has bigraded cochain groups, but is not a bigraded cochain complex.

Since the Frobenius algebra $\F[X]/(X^2-X)$ is diagonalizable in the sense of \cite{MR4079621}, the homology $\HBN(L;\F)$ has total dimension $2^{|L|}$, where $|L|$ denotes the number of components, by \cite[Thm.1]{MR4079621}. Furthermore, there is a canonical basis based on the orientations of $L$.

Denote the orientation that comes with our oriented link diagram by $\mathcal{O}$. There exists a unique smoothing $S_\mathcal{O}$ which preserves the orientations of the link diagram, and from Figure \ref{fig:crossing_complex} we see that this smoothing is in homological degree $0$. Then 
\[
G(S_\mathcal{O}) = A^{\otimes |S_\mathcal{O}|}.
\]
The orientation $\mathcal{O}$ gives rise to an element $\mathfrak{s}_\mathcal{O}\in q^{n_+(D)-n_-(D)}G(S_\mathcal{O})\subset \CBN(D;\F)$, where $n_+(D)$ is the number of positive crossings in $D$ and $n_-(D)$ the number of negative crossings in $D$.  To define this element, first put a checkerboard coloring on the discs bounded by the link diagram. This coloring induces a coloring of $S_\mathcal{O}\subset \R^2$ as in Figure \ref{fig:coloring}.
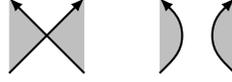
\begin{figure}[ht]
\[
\begin{tikzpicture}
\fill[color = gray!50] (1,0) -- (0.5,0.5) -- (1,1);
\fill[color = gray!50] (0,0) -- (0.5,0.5) -- (0,1);
\draw[->, thick, >=latex] (0,0) -- (1,1);
\draw[->, thick, >=latex] (1,0) -- (0,1);
\fill[color = gray!50] (2,0)  to [out=45, in = 270] (2.3, 0.5) to [out=90, in = 315] (2,1);
\fill[color = gray!50] (3,0) to [out=135, in = 270] (2.7, 0.5) to [out=90, in = 225] (3,1);
\draw[->, thick, >=latex] (2,0) to [out=45, in = 270] (2.3, 0.5) to [out=90, in = 315] (2,1);
\draw[->, thick, >=latex] (3,0) to [out=135, in = 270] (2.7, 0.5) to [out=90, in = 225] (3,1);
\end{tikzpicture}
\]
\caption{\label{fig:coloring}The checkerboard coloring on $D$ and the induced coloring of $S_\mathcal{O}$.}
\end{figure}

Each component $C$ of $S_\mathcal{O}$ has an orientation, and separates a black region from a white region. We define $I_C = X$, if the white region is to the right of the orientation, and $I_C=1-X$, if the white region is to the left of the orientation. Then $\mathfrak{s}_\mathcal{O}$ is defined as
\[
\mathfrak{s}_\mathcal{O} = I_{C_1}\otimes I_{C_2}\otimes \cdots\otimes I_{C_{|S_\mathcal{O}|}} \in q^{n_+(D)-n_-(D)} A^{\otimes|S_\mathcal{O}|},
\]
Note that if $C_i$ and $C_j$ are separated by a crossing, then $I_{C_i} \cdot I_{C_j} = X\cdot (1-X) = 0$, and so $\mathfrak{s}_\mathcal{O}$ represents a cocycle in $\CBN(D;\F)$. This cocycle represents the basis element of $\HBN(L;\F)$ corresponding to the orientation $\mathcal{O}$ \cite{MR4079621}. Since $\mathcal{O}$ is the orientation of $L$, its homological degree is $0$. We also denote this element by $\mathfrak{s}_\mathcal{O}\in \HBN^0(L;\F)$. 

As we already mentioned, the complex $\CBN(D;\F)$ is not graded, but the individual chain groups are. Let us denote the subspace of $\CBN(D;\F)$ consisting of elements of $q$-grading $j$ by $\CBN^{\ast,j}(D;\F)$. Then let
\[
\mathscr{F}_j=\mathscr{F}_j(D;\F) = \bigoplus_{i \geq j} \CBN^{\ast,i}(D;\F)
\]
Notice that $\CBN^{\ast,j}(D;\F)=0$ for $j\in 1+|L|+2\Z$ so we only need to consider $j\in |L|+2\Z$. Then each $\mathscr{F}_j$ is a subcomplex of $\CBN(D;\F)$ with
\[
0 \subset \cdots \subset\mathscr{F}_j \subset \mathscr{F}_{j-2} \subset \cdots \subset \CBN(D;\F).
\]
Furthermore, $\mathscr{F}_j$ is equal to $0$ for large $j$ and equal to $\CBN(D;\F)$ for small $j$. This filtration now induces a filtration on $\HBN(L;\F)$ by
\[
\HBN(L;\F)_j = \im (H(\mathscr{F}_j) \to \HBN(L;\F)) \subset \HBN(L;\F),
\]
and we define the $q$-grading on $\HBN(L;\F)$ by
\[
\HBN(L;\F)^{(j)} = \HBN(L;\F)_j / \HBN(L;\F)_{j+2}.
\]
For a non-zero element $x\in \HBN(L;\F)$ we define the {\em $q$-grading of} $x$ as
\[
|x|_q = \max\{j\in |L|+2\Z\mid x\in \HBN(L;\F)_j\}.
\]

\begin{definition}\label{def:s-inv}
Let $L$ be an oriented link and $\F$ a field of characteristic $p$. The {\em $s$-invariant } of $L$ is defined as
\[
s^p(L) = |\mathfrak{s}_\mathcal{O}|_q+1,
\]
with $\mathfrak{s}_\mathcal{O}\in \HBN^0(L;\F)$ as above.
\end{definition}

That the $s$-invariant only depends on the characteristic of $\F$ follows from the universal coefficient theorem. In particular, we may always choose $\F$ as a prime field.

But that this definition agrees with the definition of Beliakova--Wehrli \cite[\S 7]{MR2462446} is not immediately obvious. Firstly, Beliakova and Wehrli use the Lee complex, which uses the Frobenius system $\mathcal{F}_L$, and secondly they use the average of two $q$-gradings corresponding to elements $\bar{\mathfrak{s}}_\mathcal{O} \pm \bar{\mathfrak{s}}_{\bar{\mathcal{O}}}$ of Lee homology, where $\bar{\mathcal{O}}$ is the orientation obtained by reversing the directions on all components of $L$.

To see that these definitions agree, first note that if $\chr \F \not=2$ the ring isomorphism $\eta\colon \F[X]/(X^2-X) \to \F[X]/(X^2-1)$ sending $X$ to $\frac{1}{2}(X+1)$ induces an isomorphism $\bar{\eta}\colon \CBN(D;\F)\to C(D;\mathcal{F}_L)$ by \cite{MR2232858}, which sends $\mathfrak{s}_\mathcal{O}$ to $\bar{\mathfrak{s}}_\mathcal{O}$. There is also an involution $I\colon \CBN(D;\F)\to\CBN(D;\F)$ with $I(\mathfrak{s}_\mathcal{O}) = \pm \mathfrak{s}_{\bar{\mathcal{O}}}$, see \cite[\S 2]{MR4873797}.

\begin{lemma}\label{lm:average_s}
Let $L$ be a link and $\F$ a field of characteristic $p \not=2$. Then
\[
s^p(L) = \frac{|\mathfrak{s}_\mathcal{O}+\mathfrak{s}_{\bar{\mathcal{O}}}|_q + |\mathfrak{s}_\mathcal{O}-\mathfrak{s}_{\bar{\mathcal{O}}}|_q}{2}.
\]
\end{lemma}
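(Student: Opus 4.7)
The plan is to pass to the Lee side via the ring isomorphism $\bar\eta\colon\CBN(D;\F)\to C(D;\mathcal{F}_L)$, which sends $\mathfrak{s}_\mathcal{O}$ to $\bar{\mathfrak{s}}_\mathcal{O}$ and, by the same tensorwise recipe, $\mathfrak{s}_{\bar\mathcal{O}}$ to $\bar{\mathfrak{s}}_{\bar\mathcal{O}}$. After this translation the right-hand side is the Beliakova--Wehrli quantity, so the task reduces to the purely homological filtration identity
\[
|\mathfrak{s}_\mathcal{O}|_q+1 \;=\; \tfrac{1}{2}\bigl(|\mathfrak{s}_\mathcal{O}+\mathfrak{s}_{\bar\mathcal{O}}|_q+|\mathfrak{s}_\mathcal{O}-\mathfrak{s}_{\bar\mathcal{O}}|_q\bigr),
\]
which I will verify inside one of the two isomorphic complexes.

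Write $x=\mathfrak{s}_\mathcal{O}+\mathfrak{s}_{\bar\mathcal{O}}$, $y=\mathfrak{s}_\mathcal{O}-\mathfrak{s}_{\bar\mathcal{O}}$, and $V=\langle\mathfrak{s}_\mathcal{O},\mathfrak{s}_{\bar\mathcal{O}}\rangle$. From $\mathfrak{s}_\mathcal{O}=(x+y)/2$ we get $|\mathfrak{s}_\mathcal{O}|_q\ge\min(|x|_q,|y|_q)$, and the filtered involution $I$ with $I(\mathfrak{s}_\mathcal{O})=\pm\mathfrak{s}_{\bar\mathcal{O}}$ gives $|\mathfrak{s}_\mathcal{O}|_q=|\mathfrak{s}_{\bar\mathcal{O}}|_q$. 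To control the difference $|x|_q-|y|_q$ I introduce the basepoint endomorphism $P_i=2X_i-1$ on the $i$-th component. Because $X_i^2=X_i$ in Bar--Natan, $P_i$ is an involution, and a direct computation on the defining tensor product of idempotents yields $P_i\mathfrak{s}_\mathcal{O}=(2\epsilon_i-1)\mathfrak{s}_\mathcal{O}$ while $P_i\mathfrak{s}_{\bar\mathcal{O}}=(1-2\epsilon_i)\mathfrak{s}_{\bar\mathcal{O}}$, where $\epsilon_i\in\{0,1\}$ records which of $X,1-X$ appears in the $i$-th slot of $\mathfrak{s}_\mathcal{O}$. Hence $P_i x=\pm y$ and $P_i y=\pm x$. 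Since multiplication by $X$ is a filtered map of degree at most $-2$ on $A$, $P_i$ is filtered of degree $-2$, so $|y|_q\ge|x|_q-2$ and $|x|_q\ge|y|_q-2$; combined with $|x|_q,|y|_q\in|L|+2\Z$ this forces $|x|_q-|y|_q\in\{-2,0,2\}$.

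To exclude the degenerate case $|x|_q=|y|_q$ I expand in the basis of $X$-eigenvectors. Writing $I_{C_j}=(1+\epsilon_j\hat X)/2$ with $\hat X=2X-1$ and $k=|S_\mathcal{O}|$, multilinearity gives
\[
x=2^{1-k}\!\!\sum_{|S|\text{ even}}\!\!\epsilon_S\,\hat X^S,\qquad y=2^{1-k}\!\!\sum_{|S|\text{ odd}}\!\!\epsilon_S\,\hat X^S,
\]
where $\hat X^S$ places $\hat X$ in positions $S$ and $1$ elsewhere. The unique monomial of strictly minimal $q$-grading in the full expansion, $X^{\otimes k}$, is detected only by the term $S=\{1,\dots,k\}$, whose parity and signed coefficient $\pm\epsilon_{\{1,\dots,k\}}\neq 0$ place the lowest-grading contribution in exactly one of $x,y$ according to whether $k$ is even or odd. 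Combined with the two-sided $P_i$-bound, this forces $|x|_q-|y|_q=\pm 2$. Then $|\mathfrak{s}_\mathcal{O}|_q=\min(|x|_q,|y|_q)$, the average equals $|\mathfrak{s}_\mathcal{O}|_q+1$, and the lemma follows.

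The main obstacle is the very last step: at the chain level one can distinguish $x$ from $y$ by the parity-based bottom monomial, but one must show this distinction survives passage to homology, i.e. cannot be washed out by adding a coboundary of the appropriate degree. This is precisely where the filtered-degree bound on $P_i$ is essential — without it a priori both $x$ and $y$ could be upgraded to the same filtration level in $\HBN(L;\F)$, whereas $P_i$ rigidly locks their gradings into adjacent strata $2$ apart.
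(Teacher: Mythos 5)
Your reduction to the identity $|\mathfrak{s}_\mathcal{O}|_q+1=\tfrac12(|x|_q+|y|_q)$, the use of the filtered involution to get $|\mathfrak{s}_\mathcal{O}|_q=|\mathfrak{s}_{\bar{\mathcal{O}}}|_q$, and the basepoint operator to get $\bigl||x|_q-|y|_q\bigr|\le 2$ are all sound and parallel to what the paper does (the paper uses $X$ itself rather than $P_i=2X_i-1$, and uses invertibility of $2$ for the lower class). The problem is the step you yourself flag as the main obstacle: showing $|x|_q\neq|y|_q$ \emph{in homology}. Your bottom-monomial argument is purely chain-level: it shows that the particular cocycles $x$ and $y$ lie in different filtration levels of the cochain group, but $|{\cdot}|_q$ on $\HBN(L;\F)$ is a supremum over \emph{all} cocycle representatives of the class, so both classes could a priori be represented by other cocycles sitting in the same, higher filtration level. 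The $P_i$-bound cannot rescue this, because $P_i$ filtered of degree $-2$ together with $P_ix=\pm y$ only yields the \emph{upper} bound $\bigl||x|_q-|y|_q\bigr|\le 2$; it is perfectly consistent with $|x|_q=|y|_q$ (a degree $-2$ filtered map may send a specific class into a deeper filtration level than the bound requires). So the claimed ``rigid locking into adjacent strata'' is asserted, not proved, and the proof has a genuine gap exactly where the paper instead imports \cite[Lm.2.1]{MR4873797}, whose content is precisely that one of $\mathfrak{s}_\mathcal{O}\pm\mathfrak{s}_{\bar{\mathcal{O}}}$ lies in $\mathscr{F}_{j+2}$ as a homology class.

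To close the gap one needs a structural reason that survives passage to homology. The standard one (Rasmussen's argument for knots, which is what the cited lemma generalises) is to work in the Lee complex for $\chr\F\neq 2$: there $\bar{\mathfrak{s}}_\mathcal{O}\pm\bar{\mathfrak{s}}_{\bar{\mathcal{O}}}$ are \emph{homogeneous} for the quantum grading modulo $4$ and lie in classes differing by $2$ mod $4$, and the Lee differential changes the quantum grading by $0$ or $4$, so the whole complex (and its filtration) splits into two direct summands indexed by the residue mod $4$; hence $|x|_q\not\equiv|y|_q\pmod 4$, which combined with your upper bound gives $\bigl||x|_q-|y|_q\bigr|=2$. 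Note that this splitting does \emph{not} exist for the Bar-Natan differential, which changes the quantum grading by $0$ or $2$, so the argument cannot be run where you have placed it; you must genuinely transport the question through $\bar\eta$ (which is filtered) and use the mod-$4$ decomposition on the Lee side, or cite the lemma the paper cites.
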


\begin{proof}
Let $j = |\mathfrak{s}_\mathcal{O}|_q$, so that $s^p(L) = j+1$. Since $I$ preserves the filtration on $\CBN(D;\F)$, we have $j = |\mathfrak{s}_{\bar{\mathcal{O}}}|_q$. Furthermore, $\mathfrak{s}_\mathcal{O}+\mathfrak{s}_{\bar{\mathcal{O}}}, \mathfrak{s}_\mathcal{O}-\mathfrak{s}_{\bar{\mathcal{O}}}\in \mathscr{F}_j$. By \cite[Lm.2.1]{MR4873797}, one of $\mathfrak{s}_\mathcal{O}+\mathfrak{s}_{\bar{\mathcal{O}}}$ and $\mathfrak{s}_\mathcal{O}-\mathfrak{s}_{\bar{\mathcal{O}}}$ is in $\mathscr{F}_{j+2}$. Assume that $\mathfrak{s}_\mathcal{O}+\mathfrak{s}_{\bar{\mathcal{O}}}\in \mathscr{F}_{j+2}$.

Now pick a basepoint on $D$ so that $\CBN(D;\F)$ has the structure of a $\F[X]/(X^2-X)$-complex. We can pick the basepoint so that $X\mathfrak{s}_\mathcal{O} = \mathfrak{s}_\mathcal{O}$ and $X\mathfrak{s}_{\bar{\mathcal{O}}}=0$. That is, we put the basepoint on a component $C$ with $I_C = X$. If we had $\mathfrak{s}_\mathcal{O}+\mathfrak{s}_{\bar{\mathcal{O}}} \in \mathscr{F}_i$ with $i>j+2$, then $\mathfrak{s}_\mathcal{O} = X(\mathfrak{s}_\mathcal{O}+\mathfrak{s}_{\bar{\mathcal{O}}})\in \mathscr{F}_{i-2}\subset \mathscr{F}_{j+2}$, a contradiction. Therefore $|\mathfrak{s}_\mathcal{O}+\mathfrak{s}_{\bar{\mathcal{O}}}|_q = j+2$. We also have $|\mathfrak{s}_\mathcal{O}-\mathfrak{s}_{\bar{\mathcal{O}}}|_q = j$, for if $\mathfrak{s}_\mathcal{O}-\mathfrak{s}_{\bar{\mathcal{O}}}\in \mathscr{F}_{j+2}$, we get $2\mathfrak{s}_\mathcal{O}\in \mathscr{F}_{j+2}$. Since $2\in \F$ is invertible by assumption, this would imply $\mathfrak{s}_\mathcal{O}\in \mathscr{F}_{j+2}$. The desired result follows.
\end{proof}

Because of the isomorphism $\eta$ between the Bar-Natan complex and the Lee complex, our definition agrees with the definition in \cite{MR2462446}. Note that Lemma \ref{lm:average_s} does not hold in characteristic $2$.

The basic properties of \cite[\S 7]{MR2462446} carry over to our setting, including the case of characteristic $2$. The next lemma collects these results.

\begin{lemma}\label{lm:basic_prop}
Let $\F$ be a field of characteristic $p$. Then
\begin{enumerate}
\item We have $s^p(O_n) = 1-n$, where $O_n$ is the unlink with $n$ components.
\item We have 
\[
s^p(L_1\sqcup L_2) = s^p(L_1)+s^p(L_2) - 1,
\]
where $\sqcup$ denotes the split union between links $L_1$ and $L_2$.
\item We have
\[
s^p(L_1)+s^p(L_2) - 2 \leq s^p(L_1\# L_2) \leq s^p(L_1) + s^p(L_2),
\]
where $\#$ denotes connected sum between links $L_1$ and $L_2$.
\item We have
\[
-2|L| + 2 \leq s^p(L)+s^p(\overline{L}) \leq 2,
\]
where $\overline{L}$ is the mirror of the link $L$.
\item We have
\[
|s^p(L_2)-s^p(L_1)| \leq -\chi(S),
\]
where $S$ is a smooth oriented cobordism from $L_1$ to $L_2$ such that every connected component of $S$ has boundary in $L_1$.\hfill\qedsymbol
\end{enumerate}
\end{lemma}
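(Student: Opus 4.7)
The plan is to mirror the strategy of Beliakova and Wehrli \cite[\S 7]{MR2462446}, translating from Lee to Bar-Natan via the isomorphism $\bar{\eta}$ of Lemma~\ref{lm:average_s} when $\chr \F \neq 2$, and arguing directly in the Bar-Natan complex when $\chr \F = 2$. Parts (1) and (2) are direct computations, parts (3) and (4) follow from (5) combined with standard cobordism constructions, and (5) itself is the real content.

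For (1), take the crossingless standard diagram of $O_n$. Then $\CBN(D;\F)=A^{\otimes n}$ sits in homological degree $0$ with $n_\pm(D)=0$, and $\mathfrak{s}_\mathcal{O}=I_{C_1}\otimes\cdots\otimes I_{C_n}$ can be written out using the convention of Figure~\ref{fig:coloring}. A direct check gives $|\mathfrak{s}_\mathcal{O}|_q=-n$, whence $s^p(O_n)=1-n$. For (2), the tangle-tensor construction of Section~\ref{sec:Frob} identifies $\CBN(D_1\sqcup D_2;\F)$ with $\CBN(D_1;\F)\otimes_\F \CBN(D_2;\F)$ as filtered complexes; under this identification $\mathfrak{s}_{\mathcal{O}_1\sqcup\mathcal{O}_2}=\mathfrak{s}_{\mathcal{O}_1}\otimes\mathfrak{s}_{\mathcal{O}_2}$, and the filtration grading is additive on tensor products of homogeneous elements, giving the claim.

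For (3), $L_1\sqcup L_2$ and $L_1\# L_2$ differ by a single band move, so there is a cobordism between them of Euler characteristic $-1$ whose nontrivial component is a pair of pants and whose other components are product cylinders. Applying (5) to this cobordism, together with (2), yields the two inequalities. For (4), the bounds follow from (5) applied to suitable surfaces in $B^4$: the upper bound combines (3) with a ribbon-disc cobordism on the connect-sum component of $L\#\overline{L}$ and iterated fusions of the remaining pairs, while the lower bound comes from reading a companion cobordism in the opposite direction and using (1) and (2).

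The main work is (5), which is the link analogue of Rasmussen's slice-genus inequality. Following \cite[\S 7]{MR2462446} and in the spirit of the proof of Lemma~\ref{lm:average_s}, the strategy is to decompose $S$ into a finite sequence of elementary cobordisms (births, deaths, Reidemeister moves, saddles), to show that each elementary piece induces a chain map on $\CBN$ whose filtration shift is controlled by its contribution to $\chi(S)$, and to verify that the overall composition sends $\mathfrak{s}_{\mathcal{O}_1}$ to a nonzero multiple of $\mathfrak{s}_{\mathcal{O}_2}$ modulo higher-filtration terms; the boundary hypothesis on components of $S$ is used precisely to exclude births that would otherwise annihilate this image. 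In characteristic $\neq 2$ the argument transfers directly from the Lee side of \cite{MR2462446} through $\bar{\eta}$, so the main obstacle is the characteristic-$2$ case, where one must redo the analysis internally in $\CBN$ without the Lee decomposition used in Lemma~\ref{lm:average_s}.
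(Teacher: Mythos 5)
Your outline follows the same route the paper takes: the paper offers no proof of this lemma at all, simply asserting that the properties of Beliakova--Wehrli~\cite[\S 7]{MR2462446} carry over to the Bar-Natan setting (including characteristic $2$), and your plan of transferring their arguments through $\bar{\eta}$ when $\chr\F\neq 2$ and arguing directly in $\CBN(D;\F)$ when $\chr\F=2$ is precisely that. The reductions you describe --- (1) and (2) by direct computation with the canonical cocycles, (3) and (4) from (5) via band and annular cobordisms, and (5) by decomposing $S$ into elementary cobordisms whose induced filtered maps carry $\mathfrak{s}_{\mathcal{O}_1}$ to a nonzero multiple of $\mathfrak{s}_{\mathcal{O}_2}$ --- are the standard ones and are consistent with the cited source.
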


The last point leads to a slice obstruction as follows. An oriented link $L$ is {\em slice in the weak sense}, if there exists an oriented smooth connected surface $P\subset B^4$ of genus $0$ such that $\partial P = L$.

\begin{lemma}[{\cite[Lm.7.2]{MR2462446}}]\label{lm:slice_obst}
Let $L$ be slice in the weak sense. Then
\[
\pushQED{\qed}
|s^p(L)| \leq |L|-1.\qedhere
\popQED
\]
\end{lemma}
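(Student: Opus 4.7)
The plan is to reduce this to the cobordism inequality in Lemma \ref{lm:basic_prop}(5). Since $L$ is slice in the weak sense, we are given a connected oriented genus-$0$ surface $P \subset B^4$ with $\partial P = L \subset S^3 = \partial B^4$. A connected genus-$0$ surface with $|L|$ boundary components is topologically a $2$-sphere with $|L|$ open disks removed, so $\chi(P) = 2 - |L|$.

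To turn $P$ into a cobordism in $S^3 \times [0,1]$, I would remove a small open disk $D^2 \subset P$ from the interior of $P$, obtaining a connected surface $P' = P \setminus \mathrm{int}(D^2)$ with boundary $L \sqcup O_1$, where $O_1$ is an unknot bounding the removed disk. Identifying $B^4 \setminus \mathrm{int}(B^4_\varepsilon)$ with $S^3 \times [0,1]$ for a small ball $B^4_\varepsilon$ around a point of $P$ (chosen so that $P$ meets $B^4_\varepsilon$ in a small disk), the surface $P'$ becomes an embedded oriented smooth cobordism in $S^3 \times [0,1]$ from $O_1$ at one end to $L$ at the other, with $\chi(P') = \chi(P) - 1 = 1 - |L|$. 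Since $P'$ is connected and has a boundary component equal to $O_1$, the hypothesis of Lemma \ref{lm:basic_prop}(5) is satisfied with $L_1 = O_1$.

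Applying Lemma \ref{lm:basic_prop}(5) to this cobordism gives
\[
|s^p(L) - s^p(O_1)| \leq -\chi(P') = |L|-1,
\]
and by Lemma \ref{lm:basic_prop}(1) we have $s^p(O_1) = 0$. This yields $|s^p(L)| \leq |L|-1$, as required.

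There is no substantial obstacle: the only point worth checking carefully is that $P'$ genuinely gives an admissible cobordism for part (5), namely that each connected component of $P'$ meets $L_1 = O_1$. This holds trivially because $P$ (and hence $P'$) is connected, which is exactly the \emph{connectedness} condition built into the weak slice hypothesis. Without connectedness one would lose control of the components avoiding $L_1$, explaining why the bound is sharp under the given definition.
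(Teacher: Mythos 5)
Your proof is correct and is exactly the standard argument the paper is implicitly invoking (the lemma is stated with its \qed and a citation to Beliakova--Wehrli, the point being that it follows from parts (1) and (5) of Lemma \ref{lm:basic_prop} by puncturing the weak slice surface to obtain a connected cobordism from the unknot to $L$ with Euler characteristic $1-|L|$). All the details — the Euler characteristic count, the connectedness hypothesis of part (5), and $s^p(O_1)=0$ — check out.
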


\begin{proof}[Proof of Proposition \ref{prp:smin}]
Let $\mathcal{O}$ be the orientation on $L$. The elements $\mathfrak{s}_{\mathcal{O}'}$ with varying orientations $\mathcal{O}'$ generate all of the homology $\HBN(L;\F)$, and those orientations with the same total linking number as $\mathcal{O}$ generate $\HBN^0(L;\F)$. In particular, one of them must satisfy $|\mathfrak{s}_{\mathcal{O}'}|_q+1 = s^p_{\min}(L)$.
\end{proof}

\section{Computing $s$-invariants}\label{sec:fast_comp}

To compute the $s$-invariant of a link $L$, note that the filtered complex $\CBN(D;\F)$ gives rise to a spectral sequence $(E_n^{i,j})$ whose $E_1$-page is Khovanov homology of $L$ with $\F$-coefficients and whose $E_\infty$ page recovers $\HBN(L;\F)$. Since $\F$ is a field, the $E_{i+1}$-page is obtained from the $E_i$ page by Gaussian elimination of the boundary parts that change $q$-degree by $2i$. This starts with $E_0 = \CBN(D;\F)$ and each subsequent $E_i$-page can be thought of as a cochain complex chain homotopy equivalent to $\CBN(D;\F)$.

Since the cocycle $\mathfrak{s}_\mathcal{O}$ represents a non-zero generator of $\HBN^0(L;\F)$, we only need to keep track of it during the Gaussian eliminations until we get its representative in $E^{0,\ast}_\infty$. Note that $E^{0,j}_\infty = \HBN^0(L;\F)^{(j)}$, so we can read off $s^p(L)$ from the representative.

In the case of a knot $K$ we do not keep track of $\mathfrak{s}_\mathcal{O}$, since the final page $E_\infty$ only has two non-zero vector spaces $E^{0,s\pm 1}_\infty$ with $s$ being $s^p(K)$. Bar-Natan's algorithm for fast Khovanov homology calculations \cite{MR2320156} can be adapted to obtain fast computations for the $E_\infty$ page, compare for example \cite{MR4244204}. However, for a link $L$ simply knowing $E_\infty$ does not have to determine $s^p(L)$.

In order to keep track of $\mathfrak{s}_\mathcal{O}$, note that we can think of this cocycle as the image of a cochain map $\varphi\colon C \to \CBN(D;\F)$, where $C$ is a cochain complex concentrated in homological degree $0$ and with $C^0 = \F$, and $\varphi(1) = \mathfrak{s}_\mathcal{O}$. This idea was applied to tangles by Sano \cite{sano2025diag} in order to use Bar-Natan's algorithm for fast computations. We now describe a slightly modified version of \cite{sano2025diag}.

If $T_+\subset B$ is the tangle consisting of a positive crossing only, with $\dot{B}$ four points, let $C_{T_+}$ be the cochain complex over $\Cobl^\F(B,\dot{B})$ concentrated in homological degree $0$, generated by one object consisting of the oriented resolution of $T_+$. Similarly, let $C_{T_-}$ be the analogous cochain complex for a negative crossing $T_-\subset B$. We want to describe cochain maps $\varphi_\pm \colon C_{T_\pm} \to \CBN(T_\pm ;\F)$. Pictorially, this looks as in Figure \ref{fig:cochain_map}.
\begin{figure}[ht]
\[
\begin{tikzpicture}
\wesmoothing{0}{0}{1}{-}
\nssmoothing{3}{0}{1}{->}
\nssmoothing{3}{2}{1}{->}
\nssmoothing{6}{0}{1}{->}
\wesmoothing{9}{0}{1}{-}
\nssmoothing{6}{2}{1}{->}
\draw[->] (1.2,0.5) -- node [above] {$S$} (2.8,0.5);
\draw[->] (7.2,0.5) -- node [above] {$S$} (8.8,0.5);
\draw[->] (3.5, 2) -- node [right] {$\varphi_+$} (3.5, 1);
\draw[->] (6.5, 2) -- node [right] {$\varphi_-$} (6.5, 1);
\end{tikzpicture}
\]
\caption{\label{fig:cochain_map}The cochain maps for a single crossing.}
\end{figure}
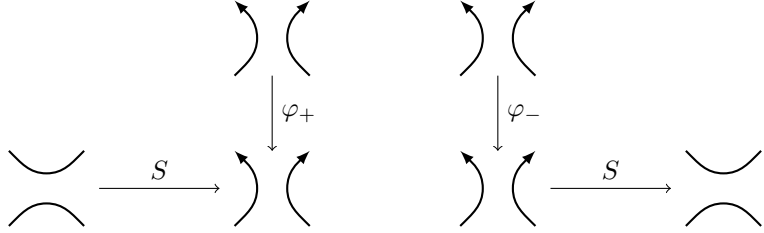

If the tangle is part of an oriented link diagram, the two arcs of the oriented smoothing belong to different components $C_1$ and $C_2$ with $I_{C_1}\not= I_{C_2}$, compare Figure \ref{fig:coloring}. The cochain map $\varphi_\pm$ mimics $I_{C_1}\otimes I_{C_2}$ by
\[
\begin{tikzpicture}
\node at (0,0.4) {$\varphi_\pm = $};
\nssmoothing{0.6}{0}{0.8}{-}
\nssmoothing{1.8}{0}{0.8}{-}
\node at (1.6, 0.4) {$-$};
\node at (1.15,0.4) {$\bullet$};
\node at (2.05,0.4) {$\bullet$};
\node at (2.35,0.4) {$\bullet$};
\end{tikzpicture}
\]
if the left arc is part of a component with $I(C) = 1-X$, and
\[
\begin{tikzpicture}
\node at (0,0.4) {$\varphi_\pm = $};
\nssmoothing{0.6}{0}{0.8}{-}
\nssmoothing{1.8}{0}{0.8}{-}
\node at (1.6, 0.4) {$-$};
\node at (0.85,0.4) {$\bullet$};
\node at (2.05,0.4) {$\bullet$};
\node at (2.35,0.4) {$\bullet$};
\end{tikzpicture}
\]
otherwise. Each summand stands for a product cobordism with dots as indicated.

Now if the oriented link diagram is a sequence of tangles as in (\ref{eq:nice_tangles}), we can form tensor products with the cochain maps $\varphi_\pm$ and obtain a cochain map
\[
\varphi\colon C = C_{T_1}\otimes C_{T_2-T_1}\otimes \cdots \otimes C_{T_n-T_{n-1}} \to \CBN(D;\F),
\]
where $C$ can be identified with the Bar-Natan complex $\CBN(S_\mathcal{O};\F)$. Notice that the orientation smoothing $S_\mathcal{O}$ is an unlink diagram without crossings. This complex is concentrated in homological degree $0$ with $C^0 = (\F[X]/(X^2-X))^{\otimes |S_\mathcal{O}|}$. Furthermore, $\varphi(1\otimes 1\otimes \cdots \otimes 1) = \mathfrak{s}_\mathcal{O}$. This is because $X\cdot X = X$ and $(1-X)^2 = 1-X$ in $\F[X]/(X^2-X)$ and the related dot-relations.

Instead of only doing the tensor products, we can also apply the delooping and Gaussian elimination operations of the Bar-Natan algorithm, which results in a cochain map
\[
\varphi\colon C = C_{T_1}\otimes C_{T_2-T_1}\otimes \cdots \otimes C_{T_n-T_{n-1}} \to D,
\]
where $D$ is chain homotopy equivalent to $\CBN(D;\F)$ as a filtered complex. We can also deloop in the complexes $C_{T_1}\otimes C_{T_2-T_1}\otimes \cdots \otimes C_{T_i-T_{i-1}}$ as we go through the crossings. Since we are only interested in $\varphi(1\otimes \cdots\otimes 1)$, rather than resolving each circle with $\F[X]/(X^2-X)\cong \F^2$, we only need to keep track of the generator $1\in \F[X]/(X^2-X)$. 

So delooping each circle in $C_{T_1}\otimes C_{T_2-T_1}\otimes \cdots \otimes C_{T_i-T_{i-1}}$ to only the $+1$-generator results in a cochain map $\varphi\colon u^0\F\to D$ with $D$ chain homotopy equivalent to $\CBN(D;\F)$ as a filtered complex, and $\varphi(1)$ represents the homology class of $\mathfrak{s}_\mathcal{O}$. Here $u^n\F$ stands for a cochain complex concentrated in homological degree $n$, with this $n$-th group being $\F$.

\subsection{Omitting generators of positive homological degree}
\!In \cite[Rm.3.3]{MR4244204}, it was observed that one can increase the efficiency of calculating the $s$-invariant by ignoring objects in homological degrees bigger than $1$. This can also be done in our situation, and in fact, we can further ignore generators in positive homological degree.

To see that this works, let $D$ be a cochain complex over $\F$ with a filtration
\[
0 \subset \cdots \subset\mathscr{F}_j(D) \subset \mathscr{F}_{j-2}(D) \subset \cdots \subset D,
\]
and let $D_+$ be the subcomplex of all elements of positive homological degree. Then both $D_+$ and $D/D_+$ admit a filtration and we have that $H^0(D)$ is a subspace of $H^0(D/D_+)$. Furthermore, $q$-gradings are defined as before for the vector spaces $H^0(D)$ and $H^0(D/D_+)$.

\begin{lemma}\label{lm:zerothhom}
Let $x\in H^0(D)$, and denote $p\colon H^0(D)\to H^0(D/D_+)$ the map induced by projection. Then $|x|_q = |p(x)|_q$, that is, the $q$-gradings of $x$ and $p(x)$ agree.
\end{lemma}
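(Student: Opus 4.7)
The plan is to establish the two inequalities $|x|_q\leq |p(x)|_q$ and $|x|_q\geq |p(x)|_q$ separately. The key structural observation that drives everything is that $D$ and $D/D_+$ agree in homological degrees $\leq 0$, while $(D/D_+)^i = 0$ for $i>0$. In particular $(D/D_+)^0 = D^0$, $(D/D_+)^{-1}=D^{-1}$, and $(D/D_+)^1=0$. Consequently $H^0(D/D_+) = D^0/\mathrm{im}(d^{-1})$ while $H^0(D)=\ker(d^0)/\mathrm{im}(d^{-1})$, so $p$ is nothing but the map induced by the inclusion $\ker(d^0)\hookrightarrow D^0$, and therefore is injective. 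The filtration on $D/D_+$ is the induced one, $\mathscr{F}_j(D/D_+) = (\mathscr{F}_j(D)+D_+)/D_+$, which in homological degree $0$ simply reduces to $\mathscr{F}_j(D)\cap D^0$.

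For the easy inequality $|x|_q \leq |p(x)|_q$, I would take any cocycle representative $z\in \mathscr{F}_j(D)\cap \ker(d^0)$ of $x$ and observe that its image in $D^0 = (D/D_+)^0$ is a representative of $p(x)$ lying in $\mathscr{F}_j(D/D_+)$. This shows that every filtration level achieved by $x$ in $D$ is also achieved by $p(x)$ in $D/D_+$.

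The slightly more interesting direction is $|x|_q \geq |p(x)|_q$. Here I would start with a representative $z'\in \mathscr{F}_j(D/D_+)\cap \ker(d_{D/D_+})$ of $p(x)$; by the structural observation, $z'$ sits in $\mathscr{F}_j(D)\cap D^0$. Pick any cocycle representative $z_0\in \ker(d^0)$ of $x$. Since $p(x)=[z_0]=[z']$ in $D^0/\mathrm{im}(d^{-1})$, we get $z_0 - z' = d^{-1}(w)$ for some $w\in D^{-1}$. Applying $d^0$ yields $d^0(z') = d^0(z_0) - d^0 d^{-1}(w) = 0$, so $z'$ is automatically a cocycle in $D$ itself. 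Thus $z'$ is a representative of $x$ lying in $\mathscr{F}_j(D)$, which yields $|x|_q \geq j$.

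I do not expect a genuine obstacle: the whole argument is powered by the fact that $D^0$ and $D^{-1}$ are unchanged when we quotient by $D_+$, so the cocycle condition in $D/D_+$ automatically upgrades to a cocycle condition in $D$ once we mod out by $\mathrm{im}(d^{-1})$. The mildest care required is in making sure the induced filtration on the quotient restricts in degree $0$ to the correct subspace; once that is spelled out, both inequalities are essentially one-line arguments and combine to give $|x|_q = |p(x)|_q$.
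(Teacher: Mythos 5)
Your proof is correct and follows essentially the same route as the paper: both use the identifications $(D/D_+)^0=D^0$, $(D/D_+)^{-1}=D^{-1}$ and $\mathscr{F}_j^0(D)=\mathscr{F}_j^0(D/D_+)$ to get the easy inequality, and both upgrade a filtered representative of $p(x)$ to a cocycle in $D$ by noting that it differs from a cocycle representative of $x$ by a coboundary from $D^{-1}$. No gaps.
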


\begin{proof}
Notice that $\mathscr{F}^0_j(D) = \mathscr{F}^0_j(D/D_+)$, and $\mathscr{F}_j^1(D/D_+) = 0 = (D/D_+)^1$. Hence $|x|_q \leq |p(x)|_q$. Now if $c\in \mathscr{F}_j^0(D/D_+)$ represents $p(x)\in H^0(D/D_+)$, and $d\in D^0$ represents $x\in H^0(D)$, then $c-d\in (D/D_+)^0 = D^0$ is a coboundary. Hence $c-d = \partial(e)$ for some $e\in (D/D_+)^{-1} = D^{-1}$. Since $d$ is a cocycle in $D^0$, we get that $c$ is also a cocycle in $D^0$, and therefore $c\in \mathscr{F}_j^0(D)$ represents $x\in H^0(D)$. Hence $|p(x)|_q\leq |x|_q$.
\end{proof}

As in \cite[\S 7.2]{MR4244204}, we can throw away generators of positive homological degree, thus enabling us to calculate the $q$-degree of $\mathfrak{s}_\mathcal{O}$ as an element of $H^0(D/D_+)$.

\section{Computing $\mathfrak{sl}_3$-$s$-invariants}

Khovanov's $\mathfrak{sl}_3$-link homology \cite{MR2100691} gives rise to another family of $s$-invariants that can be computed with a Bar-Natan-style fast algorithm, see \cite{MR3248745, schuetz2025var}.

The definition requires a rank $3$-Frobenius system, which in the case of $\F_p$, the prime field of characteristic $p\not=3$ is given by
\[
\mathcal{F}_p = (\F_p, \F_p[X]/(X^3-1), \varepsilon, \Delta),
\]
where
\[
\Delta(1) = -X^2\otimes 1 - X\otimes X - 1\otimes X^2,
\]
and
\[
\varepsilon(1) = 0, \hspace{1cm} \varepsilon(X) = -1.
\]
For the prime field $\F_3$ of characteristic $3$, it is given by
\[
\mathcal{F}_3 = (\F_3, \F_3[X]/(X^3-X), \varepsilon, \Delta_3),
\]
where
\[
\Delta_3(1) = -X^2\otimes 1 - X\otimes X - 1\otimes X^2 +1\otimes 1,
\]
and $\varepsilon$ as in $\mathcal{F}_p$.

Unlike in Section \ref{sec:Frob}, rather than using smoothings of tangles, we need to resolve tangles into webs, and the role of dotted cobordisms is played by dotted foams. We will not give precise definitions for webs and foams, the details can be found in \cite{MR2100691, MR2336253, MR3248745, schuetz2025var}.

The analogue of $\Cobl^\F(B,\dot{B})$ is the category $\Fml^\F(B,\dot{B})$, where we use the following local relations on foams:
\begin{align}
\begin{tikzpicture}[baseline={([yshift=-.6ex]current bounding box.center)}]
\parallelbox{0}{0}{1}{0.66}
\node[scale = 0.75] at (0.6, 0.33) {$\bullet\bullet\bullet$};
\end{tikzpicture}
 &= 
\begin{tikzpicture}[baseline={([yshift=-.6ex]current bounding box.center)}]
\parallelbox{0}{0}{1}{0.66}
\node[scale = 0.75] at (0.6, 0.33) {$\ast$};
\end{tikzpicture}
\tag{3D}\label{eq:3d}\\[0.2cm]
- \,\,
\begin{tikzpicture}[baseline={([yshift=-.6ex]current bounding box.center)}]
\cylinder{0}{0}{1}{1.2}
\end{tikzpicture}
\,&= 
\begin{tikzpicture}[baseline={([yshift=-.6ex]current bounding box.center)}]
\bowl{0}{1.2}{1}{0.05}
\bowlud{0}{0}{1}{0.05}
\node[scale = 0.75] at (0.4, 0.9) {$\bullet\,\bullet$};
\end{tikzpicture}
+
\begin{tikzpicture}[baseline={([yshift=-.6ex]current bounding box.center)}]
\bowl{0}{1.2}{1}{0.05}
\bowlud{0}{0}{1}{0.05}
\node[scale = 0.75] at (0.4, 0.9) {$\bullet$};
\node[scale = 0.75] at (0.4, 0.1) {$\bullet$};
\end{tikzpicture}
+
\begin{tikzpicture}[baseline={([yshift=-.6ex]current bounding box.center)}]
\bowl{0}{1.2}{1}{0.05}
\bowlud{0}{0}{1}{0.05}
\node[scale = 0.75] at (0.4, 0.1) {$\bullet\,\bullet$};
\end{tikzpicture}
\,\, \left[\,\, -
\begin{tikzpicture}[baseline={([yshift=-.6ex]current bounding box.center)}]
\bowl{0}{1.2}{1}{0.05}
\bowlud{0}{0}{1}{0.05}
\end{tikzpicture}
\,\,\right]
\tag{CN} \label{eq:cn}\\[0.2cm]
\begin{tikzpicture}[baseline={([yshift=-.6ex]current bounding box.center)}]
\sphere{0}{0}{0.45}
\end{tikzpicture}
&= 
\begin{tikzpicture}[baseline={([yshift=-.6ex]current bounding box.center)}]
\sphere{0}{0}{0.45}
\node[scale = 0.75] at (0, 0.25) {$\bullet$};
\end{tikzpicture}
=0, \hspace{1cm}
\begin{tikzpicture}[baseline={([yshift=-.6ex]current bounding box.center)}]
\sphere{0}{0}{0.45}
\node[scale = 0.75] at (0, 0.25) {$\bullet\, \bullet$};
\end{tikzpicture}
= -1
\tag{S}\label{eq:s}
\end{align}
Here the $\ast$ in (\ref{eq:3d}) is a dot if we work in characteristic $3$, and it is empty if not. Also, the term in brackets in (\ref{eq:cn}) is only present in characteristic $3$.

With these relations any closed foam can be reduced to a disjoint union of {\em theta foams}, which are evaluated as in Figure \ref{fig:theta_eval}.

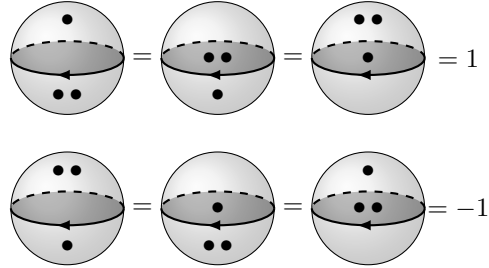
\begin{figure}[ht]
\[
\begin{tikzpicture}
\thetafoam{0}{0}{0.75}
\node at (0, 0.5) {$\bullet$};
\node at (0, -0.5) {$\bullet \, \bullet$};
\node at (1,0) {$=$};
\thetafoam{2}{0}{0.75}
\node at (2,0) {$\bullet\, \bullet$};
\node at (2, -0.5) {$\bullet$};
\node at (3,0) {$=$};
\thetafoam{4}{0}{0.75}
\node at (4,0.5) {$\bullet\,\bullet$};
\node at (4,0) {$\bullet$};
\node at (5.2,0) {$=1$};
\thetafoam{0}{-2}{0.75}
\node at (0, -2.5) {$\bullet$};
\node at (0, -1.5) {$\bullet \, \bullet$};
\node at (1,-2) {$=$};
\thetafoam{2}{-2}{0.75}
\node at (2,-2.5) {$\bullet\, \bullet$};
\node at (2, -2) {$\bullet$};
\node at (3,-2) {$=$};
\thetafoam{4}{-2}{0.75}
\node at (4,-2) {$\bullet\,\bullet$};
\node at (4,-1.5) {$\bullet$};
\node at (5.2,-2) {$=-1$};
\end{tikzpicture}
\]
\caption{\label{fig:theta_eval}The evaluation of theta foams. Any other combination of dots with at most two dots on any disc evaluates as $0$.}
\end{figure}

If $B = \emptyset$, this set of relations is sufficient, otherwise a few more local relations are needed, compare \cite[\S 7]{schuetz2025var}.

Given a tangle $T$ consisting of an oriented crossing in a disc $B$ with $4$ points $\dot{B}$, we can obtain a cochain complex $C_{f(X)}(T;\F)$ as in Figure \ref{fig:crossing_complex_sl3}, where $f(X)$ is either the polynomial $X^3-X$ or $X^3-1$, depending on whether the characteristic of $\F$ is $3$ or not.

\begin{figure}[ht]
\[
\begin{tikzpicture}
\pcrossing{0}{2}{1}
\mcrossing{0}{0}{1}
\node at (1.5, 2.5) {$:$};
\node at (2.5, 2.5) {$u^{-1}q^3$};
\spidering{3}{2}{1}{->}
\draw[->] (4.2, 2.5) -- node [above] {$U$} (5.3, 2.5);
\node at (6, 2.5) {$u^0q^2$};
\nssmoothing{6.5}{2}{1}{->}
\node at (1.5, 0.5) {$\colon$};
\node at (2.5, 0.5) {$u^0q^{-2}$};
\nssmoothing{3}{0}{1}{->}
\draw[->] (4.2, 0.5) -- node [above] {$Z$} (5.3, 0.5);
\node at (6, 0.5) {$u^1q^{-3}$};
\spidering{6.5}{0}{1}{->}
\end{tikzpicture}
\]
\caption{\label{fig:crossing_complex_sl3}The complexes associated to a positive and a negative crossing. Here $U$ and $Z$ stand for the unzip and zip foams.}
\end{figure}
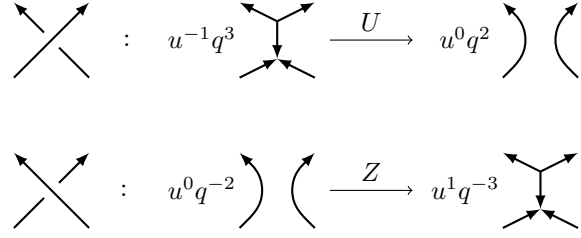

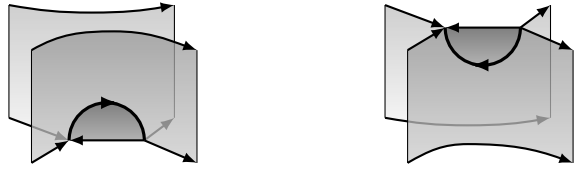
\begin{figure}[ht]
\[
\begin{tikzpicture}
\shade[top color = gray!50, bottom color = gray!10, opacity = 0.75] (-0.3, 0.6) -- (0.5, 0.3) to [out = 90, in = 180] (1, 0.8) to [out = 0, in = 90] (1.5, 0.3) -- (1.9, 0.6) -- (1.9, 2.1) to [out = 190, in = 0] (0.8, 2) to [out = 180, in = 350] (-0.3, 2.1) -- (-0.3, 0.6);
\draw[-] (1.9, 0.6) -- (1.9, 2.1);
\draw[-] (-0.3, 0.6) -- (-0.3, 2.1);
\shade[top color = gray, bottom color = gray!60] (0.5, 0.3) to [out = 90, in = 180] (1, 0.8) to [out = 0, in = 90] (1.5, 0.3) -- (0.5, 0.3);
\draw[->, thick, >=latex] (-0.3, 0.6) -- (0.5, 0.3);
\draw[->, thick, >=latex] (1.5, 0.3) -- (1.9, 0.6);
\shade[top color = gray!80, bottom color = gray!40, opacity = 0.75] (0,0) -- (0.5, 0.3) to [out = 90, in = 180] (1, 0.8) to [out = 0, in = 90] (1.5, 0.3) -- (2.2, 0) -- (2.2, 1.5) to [out = 160, in = 0] (1.1, 1.75) to [out = 180, in = 30] (0, 1.5) -- (0,0);
\draw[-] (0,0) -- (0, 1.5);
\draw[-] (2.2, 0) -- (2.2, 1.5);
\draw[->, >=latex, thick] (0,0) -- (0.5, 0.3);
\draw[<-, >=latex, thick] (0.5, 0.3) -- (1.5, 0.3);
\draw[->, >=latex, thick] (1.5, 0.3) -- (2.2, 0);
\draw[-, very thick] (0.5, 0.3) to [out = 90, in = 180] (1, 0.8) to [out = 0, in = 90] (1.5, 0.3);
\draw[->, very thick, >=latex] (1.1,0.8) -- (1.15, 0.8);
\draw[->, >=latex, thick] (0, 1.5) to [out = 30, in = 180] (1.1, 1.75) to [out = 0, in = 160] (2.2, 1.5);
\draw[->, >= latex, thick] (-0.3, 2.1) to [out = 350, in = 180] (0.8, 2) to [out = 0, in = 190] (1.9, 2.1);
\shade[top color = gray!50, bottom color = gray!10, opacity = 0.75] (4.7, 2.1) -- (5.5, 1.8) to [out= 270, in = 180] (6, 1.3) to [out=0, in = 270] (6.5, 1.8) -- (6.9, 2.1) -- (6.9, 0.6) to [out = 190, in = 0] (5.8, 0.5) to [out = 180, in = 350] (4.7, 0.6) -- (4.7, 2.1);
\draw[->, >=latex, thick] (4.7, 0.6) to [out = 350, in = 180] (5.8, 0.5) to [out = 0, in = 190] (6.9, 0.6);
\draw[-] (4.7, 2.1) -- (4.7, 0.6);
\draw[-] (6.9, 0.6) -- (6.9, 2.1);
\shade[top color = gray, bottom color = gray!60] (5.5, 1.8) to [out = 270, in = 180] (6, 1.3) to [out = 0, in = 270] (6.5, 1.8);
\shade[top color = gray!80, bottom color = gray!40, opacity = 0.75] (5, 0) to [out = 30, in = 180] (5.8, 0.25) to [out = 0, in = 160] (7.2, 0) -- (7.2, 1.5) -- (6.5, 1.8) to [out = 270, in = 0] (6, 1.3) to [out = 180, in = 270] (5.5, 1.8) -- (5, 1.5) -- (5, 0);
\draw[->, >=latex, thick] (5, 0) to [out = 30, in = 180] (5.8, 0.25) to [out = 0, in = 160] (7.2, 0);
\draw[->, >=latex, thick] (5, 1.5) -- (5.5, 1.8);
\draw[<-, >=latex, thick] (5.5, 1.8) -- (6.5, 1.8);
\draw[->, >=latex, thick] (6.5, 1.8) -- (7.2, 1.5);
\draw[->, >=latex, thick] (6.5, 1.8) -- (6.9, 2.1);
\draw[->, >=latex, thick] (4.7, 2.1) -- (5.5, 1.8);
\draw[-, very thick] (5.5, 1.8) to [out = 270, in = 180] (6, 1.3) to [out = 0, in = 270] (6.5, 1.8);
\draw[<-, >=latex, very thick] (5.85, 1.3) -- (5.9, 1.3);
\draw[-] (5, 0) -- (5, 1.5);
\draw[-] (7.2, 0) -- (7.2, 1.5);
\end{tikzpicture}
\]
\caption{\label{fig:unzip_foams}The unzip and zip foams.}
\end{figure}

For an oriented link diagram $D$ representing $L$, we also get a cochain complex $C_{f(X)}(D;\F)$, whose total homology is of dimension $3^{|L|}$, since the polynomials $X^3-X$ and $X^3-1$ are separable after passing to an algebraic closure, see \cite[\S 3]{MR2336253}, where explicit generators are given.

Furthermore, $H^0_{f(X)}(L;\F)$ is at least $3$-dimensional, and the generators playing the role of $\mathfrak{s}_\mathcal{O}$ from Section \ref{sec:Frob} can be described as follows: By using the smoothing in homological degree $0$ for each crossing, compare Figure \ref{fig:crossing_complex_sl3}, we get a web $S$ which simply consists of $k$ oriented circles. This web spans a subspace of $C^0_{f(X)}(D;\F)$ of the form $(\F[X]/(f(X)))^{\otimes k}$.

Each root $r$ gives rise to an idempotent element $I_r\in \F[X]/(f(X))$, and the element
\[
I_r(L) = I_r \otimes I_r \otimes \cdots \otimes I_r \in (\F[X]/(f(X)))^{\otimes k}
\]
is a cocycle which is a generator of $H^0_{f(X)}(L;\F)$, see \cite[\S 3]{MR2336253}.

We have that $1$ is a root of both $X^3-X$ and $X^3-1$, and the element $I_1$ is given as
\[
I_1 = \left\{ \begin{array}{cc}
\frac{1}{3}\left(X^2+X+1\right) & \chr\F\not=3, \\[0.2cm]
-X^2-X & \chr\F=3.
\end{array}
\right.
\]
Notice that $I_1$ is defined over the prime field, even if $f(X)$ does not factor over the prime field.

The complex $C_{f(X)}(D;\F)$ admits a filtration, which is due to a change in convention in \cite{MR2100691} increasing:
\[
0\subset \cdots \subset \mathscr{F}_j \subset \mathscr{F}_{j+2} \subset \cdots \subset C_{f(X)}(D;\F)
\]
In particular, non-zero elements of $H_{f(X)}(L;\F)$ have a $q$-grading, and the analogue of Definition \ref{def:s-inv} is given as follows.

\begin{definition}\label{def:sl3-s-inv}
Let $L$ be an oriented link and $\F$ a field of characteristic $p$. The {\em $\mathfrak{sl}_3$-$s$-invariant } of $L$ is defined as
\[
s^p_{\mathfrak{sl}_3}(L) = - \left(\frac{|I_1(\overline{L})|_q-2}{2}\right),
\]
with $\overline{L}$ the mirror of $L$.
\end{definition}

This slightly cumbersome definition involving the mirror of $L$ and the minus sign is because of the aforementioned convention change. This definition ensures that $s^p_{\mathfrak{sl}_3}(L) = s^p(L)$ for $L$ an unlink or a Hopf link. It also agrees with the definition for a knot, which follows from \cite[\S 6]{schuetz2025var}.

Lemma \ref{lm:basic_prop} and \ref{lm:slice_obst} carry over to $s^p_{\mathfrak{sl}_3}(L)$. In particular,  we also get a weak-slice obstruction.

The filtered complex $C_{f(X)}(D;\F)$ admits a fast scanning approach, see \cite{schuetz2025var}, and the method to compute $s^p(L)$ from Section \ref{sec:fast_comp} carries over to $s^p_{\mathfrak{sl}_3}(L)$. For a positive or negative crossing $T_\pm$ we need to describe the cochain maps $\varphi_\pm \colon C_{T_\pm}\to C_{f(X)}(T_\pm;\F)$. It has to mimic $I_1\otimes I_1$, so it is of the form
\begin{align*}
\varphi_\pm & = \frac{1}{9} \left(\,\,\,
\begin{tikzpicture}[baseline={([yshift=-.6ex]current bounding box.center)}]
\smoothing{0}{0}{0.8}{-}
\node[scale=0.7] at (0.15,0.3) {$\bullet$};
\node[scale=0.7] at (0.15,0.5) {$\bullet$};
\node[scale=0.7] at (0.65,0.3) {$\bullet$};
\node[scale=0.7] at (0.65,0.5) {$\bullet$};
\end{tikzpicture}
+
\begin{tikzpicture}[baseline={([yshift=-.6ex]current bounding box.center)}]
\smoothing{0}{0}{0.8}{-}
\node[scale=0.7] at (0.15,0.3) {$\bullet$};
\node[scale=0.7] at (0.15,0.5) {$\bullet$};
\node[scale=0.7] at (0.635, 0.4) {$\bullet$};
\end{tikzpicture}
+
\begin{tikzpicture}[baseline={([yshift=-.6ex]current bounding box.center)}]
\smoothing{0}{0}{0.8}{-}
\node[scale=0.7] at (0.15,0.3) {$\bullet$};
\node[scale=0.7] at (0.15,0.5) {$\bullet$};
\end{tikzpicture}
\right. \\
& \hskip0.865cm+
\begin{tikzpicture}[baseline={([yshift=-.6ex]current bounding box.center)}]
\smoothing{0}{0}{0.8}{-}
\node[scale=0.7] at (0.165,0.4) {$\bullet$};
\node[scale=0.7] at (0.65,0.3) {$\bullet$};
\node[scale=0.7] at (0.65,0.5) {$\bullet$};
\end{tikzpicture}
+
\begin{tikzpicture}[baseline={([yshift=-.6ex]current bounding box.center)}]
\smoothing{0}{0}{0.8}{-}
\node[scale=0.7] at (0.165,0.4) {$\bullet$};
\node[scale=0.7] at (0.635, 0.4) {$\bullet$};
\end{tikzpicture}
+
\begin{tikzpicture}[baseline={([yshift=-.6ex]current bounding box.center)}]
\smoothing{0}{0}{0.8}{-}
\node[scale=0.7] at (0.165,0.4) {$\bullet$};
\end{tikzpicture}
\\
& \left. \hskip0.935cm+
\begin{tikzpicture}[baseline={([yshift=-.6ex]current bounding box.center)}]
\smoothing{0}{0}{0.8}{-}
\node[scale=0.7] at (0.65,0.3) {$\bullet$};
\node[scale=0.7] at (0.65,0.5) {$\bullet$};
\end{tikzpicture}
+
\begin{tikzpicture}[baseline={([yshift=-.6ex]current bounding box.center)}]
\smoothing{0}{0}{0.8}{-}
\node[scale=0.7] at (0.635, 0.4) {$\bullet$};
\end{tikzpicture}
+
\begin{tikzpicture}[baseline={([yshift=-.6ex]current bounding box.center)}]
\smoothing{0}{0}{0.8}{-}
\end{tikzpicture}
\,\,\,
\right)
\end{align*}
if the characteristic is different from $3$, and
\[
\varphi_\pm = -
\begin{tikzpicture}[baseline={([yshift=-.6ex]current bounding box.center)}]
\smoothing{0}{0}{0.8}{-}
\node[scale=0.7] at (0.15,0.3) {$\bullet$};
\node[scale=0.7] at (0.15,0.5) {$\bullet$};
\node[scale=0.7] at (0.65,0.3) {$\bullet$};
\node[scale=0.7] at (0.65,0.5) {$\bullet$};
\end{tikzpicture}
-
\begin{tikzpicture}[baseline={([yshift=-.6ex]current bounding box.center)}]
\smoothing{0}{0}{0.8}{-}
\node[scale=0.7] at (0.15,0.3) {$\bullet$};
\node[scale=0.7] at (0.15,0.5) {$\bullet$};
\node[scale=0.7] at (0.635, 0.4) {$\bullet$};
\end{tikzpicture}
-
\begin{tikzpicture}[baseline={([yshift=-.6ex]current bounding box.center)}]
\smoothing{0}{0}{0.8}{-}
\node[scale=0.7] at (0.165,0.4) {$\bullet$};
\node[scale=0.7] at (0.65,0.3) {$\bullet$};
\node[scale=0.7] at (0.65,0.5) {$\bullet$};
\end{tikzpicture}
-
\begin{tikzpicture}[baseline={([yshift=-.6ex]current bounding box.center)}]
\smoothing{0}{0}{0.8}{-}
\node[scale=0.7] at (0.165,0.4) {$\bullet$};
\node[scale=0.7] at (0.635,0.4) {$\bullet$};
\end{tikzpicture}
\]
if the characteristic is $3$. Since $I_1$ is an idempotent, tensoring the $\varphi_\pm$ combines to a cochain map $\varphi\colon (\Z[X]/(f(X)))^{3^k}\to C_{f(X)}(D;\F)$ for an oriented link diagram $D$ which smoothes to a web consisting of $k$ circles, and so that $\varphi(1\otimes \cdots \otimes 1) = I_1(L)$.

We can mimic the scanning algorithm from Section \ref{sec:fast_comp} to get a cochain map $\varphi\colon u^0\F \to D$ where $D$ is filtered chain homotopic to $C_{f(X)}(D;\F)$ and $\varphi(1)$ representing $I_1(L)$.

\section{Computations}

Both $s^p$ and $s^p_{\mathfrak{sl}_3}$ are slice-torus link invariants in the sense of \cite{MR4176697} (up to a factor), so by \cite[Rm.2.6]{MR4311821} they agree with the signature for non-split alternating links. For $s^p$ this already follows from \cite{MR2509750}, but the $E_\infty$-page of the $\mathfrak{sl}_3$-spectral sequence can have more than three non-trivial groups $E_\infty^{0,j}$ for alternating links (the Borromean rings being an example). We will therefore restrict our attention to non-alternating knots and links.

The algorithms to compute $s^p$ and $s^p_{\mathfrak{sl}_3}$ have been implemented in \verb+knotjob+, which is available on the author's website at \url{https://www.maths.dur.ac.uk/users/dirk.schuetz/knotjob.html}.

The $2$-component link $L_\varepsilon$ in Figure \ref{fig:int_link} has linking number $0$, but the $s$-invariant changes if we change the orientation on a component. We have $s(L_+)=1$ and $s(L_-)=3$, independent of characteristic and also if we consider $s_{\mathfrak{sl}_3}$. The signature of this link is $3$, independent of the orientation. This implies Theorem \ref{th:nonconst}.

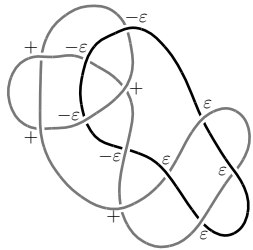
\begin{figure}[ht]
\[
\begin{tikzpicture}
\tikzstyle{box}=[-, line width = 1pt]
\tikzstyle{bigbox}=[-, line width = 3pt]
\colorlet{colr-1}{white}
\definecolor{colr0}{rgb}{0.0, 0.0, 0.0}
\definecolor{colr1}{rgb}{0.47058824, 0.47058824, 0.47058824}
\draw[box, colr1] (3.5199835, -2.4427867) .. controls (3.4453409, -2.5755658) and (3.3698187, -2.6402209) .. (3.2834275, -2.765384);
\draw[bigbox, colr-1] (3.2842364, -2.4325337) .. controls (3.370492, -2.5627804) and (3.4448147, -2.6333933) .. (3.5143614, -2.7732873);
\draw[box, colr0] (3.2842364, -2.4325337) .. controls (3.370492, -2.5627804) and (3.4448147, -2.6333933) .. (3.5143614, -2.7732873);
\draw[box, colr0] (3.1036804, -3.391436) .. controls (2.9900649, -3.3187697) and (2.930367, -3.2297332) .. (2.8426797, -3.1117072);
\draw[bigbox, colr-1] (2.8335001, -3.4242635) .. controls (2.9394262, -3.319032) and (2.9923658, -3.221788) .. (3.0760872, -3.0905328);
\draw[box, colr1] (2.8335001, -3.4242635) .. controls (2.9394262, -3.319032) and (2.9923658, -3.221788) .. (3.0760872, -3.0905328);
\draw[box, colr1] (2.3776698, -2.805607) .. controls (2.4882326, -2.6945357) and (2.5486743, -2.5972965) .. (2.6332238, -2.4481168);
\draw[bigbox, colr-1] (2.6333146, -2.804327) .. controls (2.547806, -2.6815357) and (2.4925532, -2.598876) .. (2.3952565, -2.511337);
\draw[box, colr0] (2.6333146, -2.804327) .. controls (2.547806, -2.6815357) and (2.4925532, -2.598876) .. (2.3952565, -2.511337);
\draw[box, colr0] (2.1237628, -2.357427) .. controls (2.0045283, -2.3125892) and (1.9210886, -2.2989554) .. (1.7984768, -2.2609444);
\draw[bigbox, colr-1] (1.9174112, -2.5148077) .. controls (1.9492264, -2.3557985) and (1.9814459, -2.2524517) .. (2.015014, -2.0906312);
\draw[box, colr1] (1.9174112, -2.5148077) .. controls (1.9492264, -2.3557985) and (1.9814459, -2.2524517) .. (2.015014, -2.0906312);
\draw[box, colr0] (1.5218171, -2.0974092) .. controls (1.4339882, -1.9830371) and (1.3980212, -1.8692707) .. (1.3702894, -1.7126559);
\draw[bigbox, colr-1] (1.2703313, -1.990307) .. controls (1.390427, -1.9487829) and (1.4631568, -1.8949437) .. (1.5697972, -1.824806);
\draw[box, colr1] (1.2703313, -1.990307) .. controls (1.390427, -1.9487829) and (1.4631568, -1.8949437) .. (1.5697972, -1.824806);
\draw[box, colr1] (1.2821871, -1.0638852) .. controls (1.4028058, -1.0860723) and (1.4749503, -1.1413848) .. (1.5799369, -1.2020048);
\draw[bigbox, colr-1] (1.3826376, -1.3251431) .. controls (1.4158235, -1.1680264) and (1.4493293, -1.0522244) .. (1.5414597, -0.9386938);
\draw[box, colr0] (1.3826376, -1.3251431) .. controls (1.4158235, -1.1680264) and (1.4493293, -1.0522244) .. (1.5414597, -0.9386938);
\draw[box, colr1] (1.8417004, -0.5085428) .. controls (1.9578016, -0.6123165) and (1.9923549, -0.73954046) .. (2.0212848, -0.8977376);
\draw[bigbox, colr-1] (1.8217784, -0.7610794) .. controls (1.9361289, -0.70696014) and (2.0288923, -0.6636008) .. (2.1687918, -0.70829624);
\draw[box, colr0] (1.8217784, -0.7610794) .. controls (1.9361289, -0.70696014) and (2.0288923, -0.6636008) .. (2.1687918, -0.70829624);
\draw[box, colr0] (2.8817933, -1.6855674) .. controls (2.9487748, -1.8510147) and (2.9957967, -1.9544821) .. (3.0763218, -2.093406);
\draw[bigbox, colr-1] (3.1233373, -1.8055154) .. controls (3.0034866, -1.8608841) and (2.931069, -1.9404411) .. (2.8408418, -2.0752172);
\draw[box, colr1] (3.1233373, -1.8055154) .. controls (3.0034866, -1.8608841) and (2.931069, -1.9404411) .. (2.8408418, -2.0752172);
\draw[box, colr1] (1.9782424, -3.30526) .. controls (1.9021413, -3.166286) and (1.8818872, -3.0500443) .. (1.8757832, -2.8993282);
\draw[bigbox, colr-1] (1.7036158, -3.091093) .. controls (1.8475343, -3.1183643) and (1.9459971, -3.0920715) .. (2.0745745, -3.0266683);
\draw[box, colr1] (1.7036158, -3.091093) .. controls (1.8475343, -3.1183643) and (1.9459971, -3.0920715) .. (2.0745745, -3.0266683);
\draw[box, colr1] (2.0345728, -1.6876012) .. controls (1.9947124, -1.5344985) and (1.9229678, -1.4553958) .. (1.8286486, -1.3673704);
\draw[bigbox, colr-1] (1.8231711, -1.6342928) .. controls (1.9207629, -1.5404361) and (1.9936926, -1.4598954) .. (2.0337424, -1.303325);
\draw[box, colr1] (1.8231711, -1.6342928) .. controls (1.9207629, -1.5404361) and (1.9936926, -1.4598954) .. (2.0337424, -1.303325);
\draw[box, colr1] (0.8284877, -1.3190283) .. controls (0.83377683, -1.1342229) and (0.83205503, -0.9974594) .. (0.9026395, -0.83475536);
\draw[bigbox, colr-1] (0.9882927, -1.0786452) .. controls (0.87470496, -1.0819458) and (0.7873123, -1.0504074) .. (0.6668078, -1.0981022);
\draw[box, colr1] (0.9882927, -1.0786452) .. controls (0.87470496, -1.0819458) and (0.7873123, -1.0504074) .. (0.6668078, -1.0981022);
\draw[box, colr1] (0.6524646, -1.995123) .. controls (0.7710768, -2.0514514) and (0.859832, -2.0322232) .. (0.97542685, -2.027552);
\draw[bigbox, colr-1] (0.8207397, -1.7859881) .. controls (0.82010555, -1.971296) and (0.8171699, -2.103866) .. (0.87309, -2.2788036);
\draw[box, colr1] (0.8207397, -1.7859881) .. controls (0.82010555, -1.971296) and (0.8171699, -2.103866) .. (0.87309, -2.2788036);
\draw[box, colr0] (3.2842364, -2.4325337) .. controls (3.2029235, -2.3097503) and (3.1518705, -2.2237446) .. (3.0763218, -2.093406);
\draw[box, colr1] (3.6, -2.0771747) .. controls (3.5805633, -1.9436721) and (3.5193653, -1.8506097) .. (3.421986, -1.7978032);
\draw[box, colr0] (3.5809755, -3.148485) .. controls (3.5581026, -3.28236) and (3.4963248, -3.3755765) .. (3.3978806, -3.424496);
\draw[box, colr1] (3.2834275, -2.765384) .. controls (3.202295, -2.8829281) and (3.1540394, -2.968322) .. (3.0760872, -3.0905328);
\draw[box, colr1] (2.5416012, -3.5923524) .. controls (2.4270792, -3.6172998) and (2.3418922, -3.608127) .. (2.2333295, -3.5545473);
\draw[box, colr0] (2.8426797, -3.1117072) .. controls (2.76058, -3.001202) and (2.7129123, -2.9186301) .. (2.6333146, -2.804327);
\draw[box, colr1] (2.3776698, -2.805607) .. controls (2.2744074, -2.9093444) and (2.195741, -2.9650345) .. (2.0745745, -3.0266683);
\draw[box, colr0] (2.3952565, -2.511337) .. controls (2.3043613, -2.4295578) and (2.231436, -2.3979168) .. (2.1237628, -2.357427);
\draw[box, colr1] (2.6332238, -2.4481168) .. controls (2.7123373, -2.3085284) and (2.7542832, -2.2045133) .. (2.8408418, -2.0752172);
\draw[box, colr1] (1.9174112, -2.5148077) .. controls (1.8888249, -2.6576793) and (1.8699555, -2.7554336) .. (1.8757832, -2.8993282);
\draw[box, colr0] (1.7984768, -2.2609444) .. controls (1.6861757, -2.2261295) and (1.6016195, -2.201329) .. (1.5218171, -2.0974092);
\draw[box, colr1] (2.015014, -2.0906312) .. controls (2.04581, -1.9421736) and (2.0712829, -1.8286023) .. (2.0345728, -1.6876012);
\draw[box, colr1] (1.2703313, -1.990307) .. controls (1.1625403, -2.0275767) and (1.0859637, -2.0230849) .. (0.97542685, -2.027552);
\draw[box, colr0] (1.3702894, -1.7126559) .. controls (1.3452052, -1.5709922) and (1.3528097, -1.4663616) .. (1.3826376, -1.3251431);
\draw[box, colr1] (1.5697972, -1.824806) .. controls (1.6675313, -1.7605258) and (1.7347997, -1.7192822) .. (1.8231711, -1.6342928);
\draw[box, colr1] (1.2821871, -1.0638852) .. controls (1.1731972, -1.0438371) and (1.0977825, -1.0754638) .. (0.9882927, -1.0786452);
\draw[box, colr0] (1.5414597, -0.9386938) .. controls (1.6253953, -0.83526164) and (1.7115943, -0.8132268) .. (1.8217784, -0.7610794);
\draw[box, colr1] (1.5799369, -1.2020048) .. controls (1.6756561, -1.2572738) and (1.7435337, -1.2879351) .. (1.8286486, -1.3673704);
\draw[box, colr1] (1.4968807, -0.4) .. controls (1.3667628, -0.40610734) and (1.2765057, -0.44021103) .. (1.1623862, -0.5214318);
\draw[box, colr0] (2.4763622, -0.9410722) .. controls (2.5750408, -1.052467) and (2.630609, -1.1436664) .. (2.7048433, -1.2838165);
\draw[box, colr1] (2.0212848, -0.8977376) .. controls (2.0485082, -1.0466042) and (2.0703068, -1.1603801) .. (2.0337424, -1.303325);
\draw[box, colr1] (1.3650775, -2.9331543) .. controls (1.2488841, -2.8512459) and (1.175483, -2.781456) .. (1.0819257, -2.6600246);
\draw[box, colr1] (0.8284877, -1.3190283) .. controls (0.82348067, -1.4939741) and (0.8213393, -1.6107482) .. (0.8207397, -1.7859881);
\draw[box, colr1] (0.43495607, -1.3493638) .. controls (0.3871246, -1.4772533) and (0.38553494, -1.5980356) .. (0.42932665, -1.7285775);
\draw[box, colr1] (3.421986, -1.7978032) .. controls (3.3221748, -1.7436777) and (3.232485, -1.7550912) .. (3.1233373, -1.8055154);
\draw[box, colr1] (3.6, -2.0771747) .. controls (3.6197352, -2.2127273) and (3.5870597, -2.3234673) .. (3.5199835, -2.4427867);
\draw[box, colr0] (3.3978806, -3.424496) .. controls (3.2975893, -3.474333) and (3.2063267, -3.4570868) .. (3.1036804, -3.391436);
\draw[box, colr0] (3.5809755, -3.148485) .. controls (3.604278, -3.0120962) and (3.5767903, -2.8988633) .. (3.5143614, -2.7732873);
\draw[box, colr1] (2.2333295, -3.5545473) .. controls (2.123274, -3.5002308) and (2.0486887, -3.433907) .. (1.9782424, -3.30526);
\draw[box, colr1] (2.5416012, -3.5923524) .. controls (2.657505, -3.5671039) and (2.7365546, -3.5205736) .. (2.8335001, -3.4242635);
\draw[box, colr1] (1.1623862, -0.5214318) .. controls (1.0471816, -0.6034248) and (0.96837056, -0.6832388) .. (0.9026395, -0.83475536);
\draw[box, colr1] (1.4968807, -0.4) .. controls (1.6281595, -0.39383817) and (1.7315513, -0.41008902) .. (1.8417004, -0.5085428);
\draw[box, colr0] (2.7048433, -1.2838165) .. controls (2.7796266, -1.4250033) and (2.818821, -1.5300227) .. (2.8817933, -1.6855674);
\draw[box, colr0] (2.4763622, -0.9410722) .. controls (2.3763502, -0.8281721) and (2.298524, -0.7497434) .. (2.1687918, -0.70829624);
\draw[box, colr1] (1.0819257, -2.6600246) .. controls (0.9875035, -2.5374706) and (0.92533475, -2.4422429) .. (0.87309, -2.2788036);
\draw[box, colr1] (1.3650775, -2.9331543) .. controls (1.482287, -3.0157793) and (1.5689888, -3.0655825) .. (1.7036158, -3.091093);
\draw[box, colr1] (0.42932665, -1.7285775) .. controls (0.4737997, -1.8611505) and (0.54337895, -1.9433188) .. (0.6524646, -1.995123);
\draw[box, colr1] (0.43495607, -1.3493638) .. controls (0.4834531, -1.2196949) and (0.5561483, -1.1419003) .. (0.6668078, -1.0981022);
\node[scale=0.7] at (0.7, -0.95) {$+$};
\node[scale=0.7] at (0.7, -2.15) {$+$};
\node[scale=0.7] at (2.1, -1.5) {$+$};
\node[scale=0.7] at (1.8, -3.2) {$+$};
\node[scale=0.7] at (2.1, -0.55) {$-\varepsilon$};
\node[scale=0.7] at (1.3,-0.95) {$-\varepsilon$};
\node[scale=0.7] at (1.2, -1.85) {$-\varepsilon$};
\node[scale=0.7] at (1.75,-2.4) {$-\varepsilon$};
\node[scale=0.7] at (2.5, -2.45) {$\varepsilon$};
\node[scale=0.7] at (3.25, -2.6) {$\varepsilon$};
\node[scale=0.7] at (3.05, -1.7) {$\varepsilon$};
\node[scale=0.7] at (3, -3.45) {$\varepsilon$};
\end{tikzpicture}
\]
\caption{\label{fig:int_link}The $2$-component link $L_\varepsilon$ with $\varepsilon\in \{\pm 1\}$ and oriented so to give the indicated crossing signs.}
\end{figure}

In particular, the $s$-invariant is not simply given by $s^p_{\min}(L)$ in general. The link in Figure \ref{fig:int_link} has $12$ crossings. We did not find any $2$-component links with up to $11$ crossings with this property. A list with these links can be found on the {\em Knot Atlas}, see \url{https://katlas.org/wiki/Main_Page}. We did find links with $11$ crossings and more than $2$ components, where $s^p(L)>s^p_{\min}(L)$. Since we did not check every possible orientation for links with more than two components, there may indeed be more.

During the Bar-Natan algorithm we have to keep track of one extra generator and how it maps to the cochain complex. This does not affect the computational complexity or computation time significantly. Also, recall that to calculate the $s$-invariant of a knot, we can ignore all generators of homological degree bigger than $1$. By keeping track of the cochain map, we can ignore all generators of positive homological degree. When applied to knots, we observed a slight improvement in performance. 

Despite these slight improvements, our computer programme still uses the old method when calculating $s$-invariants of knots. When applying the new method to knots, the $0$-th homology group need not be of dimension $2$, compare Lemma \ref{lm:zerothhom}. This removes one check for bugs in computer programmes. We have compared both methods for knots with up to $15$ crossings to check both give the same results. Our programme also checks that the image $\varphi\colon \F \to E^{0,\ast}_i$ represents a cocycle at every stage of the spectral sequence.

\bibliography{KnotHomology}
\bibliographystyle{amsalpha}

\end{document}